\newtheorem{thm}{Theorem}[section]
\newtheorem{lem}[thm]{Lemma}
\newtheorem{prob}[thm]{Problem}
\newtheorem{prop}[thm]{Proposition}
\newtheorem{defn}[thm]{Definition}
\numberwithin{equation}{section}
\begin{document}

%------------------------------------------------------------------------------------%
%%Don not change any thing in this part
%\leftline{ \scriptsize \it Bulletin of the Iranian Mathematical
%Society  Vol. {\bf\rm XX} No. X {\rm(}201X{\rm)}, pp XX-XX.}

%\vspace{1.3 cm}

%------------------------------------------------------------------------------------%
\title{Total dominator chromatic number and \\
Mycieleskian graphs}
\author{Adel P. Kazemi%$^*$
}

\thanks{{\scriptsize
\hskip -0.4 true cm MSC(2010): %Primary:
05C15; 05C69.%; Secondary: 05C70.
\newline Keywords: Total domination number, total dominator coloring, chromatic number.\\
%Received: ????, Accepted: ???.%\\
%$*$Corresponding author
%\newline\indent{\scriptsize $\copyright$ 2011 Iranian Mathematical
%Society}
}}

\maketitle

\begin{abstract}
A total dominator coloring of a graph $G$ is a proper
coloring of $G$ in which each vertex of the graph is adjacent to every
vertex of some color class. The total dominator chromatic
number $\chi_d^t(G)$ of $G$ is the minimum number of color classes in a
total dominator coloring of it. In [Total dominator chromatic numer in graphs, submitted]
the author initialed to study this number in graphs and obtained some important results. Here,
we continue it in Mycieleskian graphs. We show that the total dominator chromatic
number of the Mycieleskian of a graph $G$ belongs to between $\chi_d^t(G)+1$ and $\chi_d^t(G)+2$,
and then characterize the family of graphs the their total dominator chromatic numbers are each of them.
\end{abstract}
\vskip 0.2 true cm

%------------------------------------------------------------------------------------

\pagestyle{myheadings}
\markboth{\rightline {\scriptsize  A. P. Kazemi}}
         {\leftline{\scriptsize Total dominator chromatic number and Mycieleskian graphs}}

\bigskip
\bigskip

%------------------------------------------------------------------------------------

\section{\bf Introduction}
\vskip 0.4 true cm

All graphs considered here are finite, undirected and simple. For
standard graph theory terminology not given here we refer to \cite{West}.
Let $G=(V,E) $ be a graph with the \emph{vertex set} $V$ of \emph{order}
$n(G)$ and the \emph{edge set} $E$ of \emph{size} $m(G)$. The
\emph{open neighborhood} of a
vertex $v\in V$ is $N_{G}(v)=\{u\in V\ |\ uv\in E\}$, while its cardinality is the \emph{degree} of $v$. The
\emph{minimum} and \emph{maximum degree} of $G$ are denoted by
$\delta =\delta (G)$ and $\Delta =\Delta (G)$, respectively. We write $K_n$, $C_{n}$ and $P_{n}$ for a \emph{complete graph} or a \emph{cycle} or a \emph{path} of order $n$, respectively, while $W_n$ denotes a \emph{wheel} of order $n+1$. The \emph{complement} of a graph $G$ is denoted by $\overline{G}$
and is a graph with the vertex set $V(G)$ and for every two vertices
$v$ and $w$, $vw\in E(\overline{G})$\ if and only if $vw\not\in
E(G)$.

Let $G=(V,E)$ be a graph with the vertex set $V=\{v_i\mid 1\leq i \leq n\}$.
The \emph{Mycieleskian graph} $M(G)$ of a graph $G$ is a graph with the vertex set
$V\cup U \cup \{w\}$ such that $U=\{u_i\mid 1\leq i \leq n\}$, and the edge
set $E\cup \{u_iv_j \mid v_iv_j\in E(G)\} \cup \{u_iw \mid u_i\in U\}$ \cite{West}.

A \emph{total dominating set} $S$ of a graph $G$ is a subset of the vertices in
it such that each vertex has at least one neighbor in $S$, that is, $N_G(v)\cap S\neq \emptyset$, and the
\emph{total domination number} $\gamma_t(G)$ of $G$ is the cardinality of a
minimum total dominating set \cite{hhs1}.

A \emph{proper coloring} of a graph $G$ is a function from
the vertices of the graph to a set of colors such that any two
adjacent vertices have different colors, and the \emph{chromatic number}
$\chi (G)$ of $G$ is the minimum number of colors needed in a proper
coloring of a graph \cite{West}.
In a proper coloring of a graph a \emph{color class} is the set of all same colored vertices of the graph.

In \cite{Kaz}, the author defined the new concept \emph{total dominator coloring in graphs} as following.
%---------------------------------------------------------------------------------------------------------------
\begin{defn}
\label{total dominator coloring} \cite{Kaz} \emph{ A} total dominator coloring \emph{of a graph $G$, briefly TDC, is a proper
coloring of $G$ in which each vertex of the graph is adjacent to every
vertex of some color class. The }total dominator chromatic
number $\chi_d^t(G)$ \emph{of $G$ is the minimum number of color classes in a
TDC of $G$. A }$\chi_d^t(G)$-coloring \emph{of
$G$ is any total dominator coloring with $\chi_d^t(G)$ colors}.
\end{defn}
%---------------------------------------------------------------------------------------------------------------

Here, we continue the studying of this number in Mycieleskian graphs. Exactly, we show that the total dominator chromatic
number of the Mycieleskian of a graph $G$ lies between $\chi_d^t(G)+1$ and $\chi_d^t(G)+2$,
and then characterize the family of graphs the their total dominator chromatic numbers are each of them.
First, we give some needed definitions, terminology and propositions from \cite{Kaz}.

If $f$ is a total dominator coloring or a proper coloring of $G$
with the coloring classes $V_1$, $V_2$, ..., $V_{\ell}$ such that every vertex in $V_i$ has color $i$, we write
simply $f=(V_1,V_2,...,V_{\ell})$. In the following two definitions $f=(V_1,V_2,...,V_{\ell})$ is a TDC of $G$.
%---------------------------------------------------------------------------------------------------------------

\begin{defn}
\label{common neighborhood}
\emph{A vertex $v$ is called a }common neighbor \emph{of $V_i$ if $v\succ V_i$, that is, $v$ is adjacent to all vertices in $V_i$. The set of all common neighbors of $V_i$ is called the }common neighborhood \emph{of $V_i$ in $G$ and denoted by $CN_G(V_i)$.}
\end{defn}
%---------------------------------------------------------------------------------------------------------------

\begin{defn}
\label{private neighborhood} \emph{A vertex $v$ is called the} private neighbor of $V_i$ with respect to \emph{$f$ if $v\succ V_i$ and $v\nsucc V_j$ for all $j\neq i$. The set of all private neighbors of $V_i$ is called the} private neighborhood \emph{of $V_i$ in $G$ and denoted by $pn_G(V_i;f)$ or simply by $pn(V_i;f)$.}
\end{defn}

The following propositions are useful for our investigations.
%---------------------------------------------------------------------------------------------------------------

\begin{prop}
\label{chi_dt W_n} Let $W_n$ be a wheel of order $n+1\geq 4$. Then
\begin{equation*}
\chi_d^t(W_n)=\left\{
\begin{array}{ll}
3 & \mbox{if }n \mbox{ is even}, \\
4 & \mbox{if }n \mbox{ is odd}.
\end{array}
\right.
\end{equation*}
\end{prop}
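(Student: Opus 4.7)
The plan is to pinch $\chi_d^t(W_n)$ between matching lower and upper bounds. Let $h$ denote the hub of $W_n$ and let $v_1, v_2, \ldots, v_n$ be the vertices of the rim cycle $C_n$, so $N_{W_n}(h)=\{v_1,\ldots,v_n\}$ and $N_{W_n}(v_i)=\{h,v_{i-1},v_{i+1}\}$ (indices mod $n$).

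For the lower bound I would invoke the trivial inequality $\chi_d^t(G)\geq \chi(G)$, which holds because every TDC is in particular a proper coloring. Since $W_n=K_1+C_n$, one has $\chi(W_n)=3$ when $n$ is even and $\chi(W_n)=4$ when $n$ is odd. This already yields $\chi_d^t(W_n)\geq 3$ in the even case and $\chi_d^t(W_n)\geq 4$ in the odd case, matching the claimed values.

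For the upper bound when $n$ is even, I would exhibit the TDC $f=(V_1,V_2,V_3)$ in which $V_3=\{h\}$ and $V_1,V_2$ alternate along the rim, so $V_1=\{v_i:i\text{ odd}\}$ and $V_2=\{v_i:i\text{ even}\}$. This is a proper 3-coloring; the hub is adjacent to every vertex of $V_1$ (and of $V_2$), while each rim vertex $v_i$ is adjacent to the singleton class $V_3=\{h\}$. Hence every vertex totally dominates some color class, giving $\chi_d^t(W_n)\leq 3$.

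For the upper bound when $n$ is odd, the same idea works with one extra color: set $V_4=\{h\}$ and properly 3-color the odd cycle $C_n$ using $V_1,V_2,V_3$ (this is possible since $\chi(C_n)=3$ when $n$ is odd). Again every rim vertex is adjacent to the singleton $V_4=\{h\}$, and $h$ is adjacent to every vertex of any non-empty rim class, so $\chi_d^t(W_n)\leq 4$. The only mildly delicate point in the whole argument is making sure that in each construction a color class totally dominating every rim vertex actually exists; using the singleton $\{h\}$ sidesteps this cleanly, since $h$ is a universal neighbor of the rim. Combining the two bounds in each parity yields the stated formula.
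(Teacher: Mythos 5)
Your argument is correct and complete: the lower bound $\chi_d^t(W_n)\geq\chi(W_n)$ follows because every total dominator coloring is in particular a proper coloring, and your explicit colorings (with the singleton hub class $\{h\}$ totally dominated by every rim vertex, and the hub totally dominating any rim class) give the matching upper bounds. Note that the paper itself states this proposition without proof, importing it from the cited reference \cite{Kaz}, so there is no in-paper argument to compare against; the pinching strategy you use is the natural one and handles all cases $n\geq 3$, including $W_3=K_4$.
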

%---------------------------------------------------------------------------------------------------------------

\begin{prop}
\label{chi_dt C_n} Let $C_n$ be a cycle of order $n\geq 3$. Then
\begin{equation*}
\chi_d^t(C_n)=\left\{
\begin{array}{ll}
2                                & \mbox{if }n=4, \\
4\lfloor \frac{n}{6}\rfloor +r   & \mbox{if } n\neq 4 \mbox{ and for }r=0,1,2,4,~~n\equiv r\pmod{6},\\
4\lfloor \frac{n}{6}\rfloor +r-1 & \mbox{if }~~n\equiv r\pmod{6} \mbox{, where }r=3,5.
\end{array}
\right.
\end{equation*}
\end{prop}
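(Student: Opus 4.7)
The plan is to establish matching upper and lower bounds: the upper bound by an explicit periodic construction based on blocks of six consecutive vertices, and the lower bound by a local counting argument along the cycle.

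First I would record the key structural observation. In $C_n$ each vertex $v$ has only two neighbors, so any color class $V_i$ that $v$ totally dominates must satisfy $V_i\subseteq N_{C_n}(v)$, which forces either $|V_i|=1$ (a singleton neighbor of $v$) or $|V_i|=2$ with $V_i$ equal to the set of both neighbors of $v$ (two vertices at cyclic distance two sharing a color used by no other vertex). Call such classes \emph{light}. Every vertex of $C_n$ must then be totally dominated by at least one light class, and this is the only way the TDC property can be satisfied on a cycle.

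For the upper bound when $n=6k$, I would cut the cycle into $k$ consecutive blocks of six vertices and color the $j$-th block with four fresh colors in the pattern $(4j-3,\,4j-2,\,4j-3,\,4j-1,\,4j,\,4j-1)$. Each block then contributes two singleton classes and two distance-two pairs, and one checks directly that every vertex totally dominates one light class within its own block, yielding a valid TDC with $4k$ colors. For $n=6k+r$ with $r\in\{1,2,4\}$, I would append $r$ extra vertices colored with $r$ new singleton colors, arranged so that the wrap-around vertex still has a singleton neighbor; this contributes $r$ additional colors. For $r\in\{3,5\}$, the additional length permits one of the extra vertices to share a color with a non-adjacent vertex already present, saving exactly one color and giving $4k+r-1$. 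The case $n=4$ is immediate: color the two pairs of opposite vertices with colors $1$ and $2$ and observe that each vertex is adjacent to both members of the other class.

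For the lower bound, fix any TDC $f=(V_1,\dots,V_\ell)$ of $C_n$ and, using the structural observation, assign to each vertex $v$ a light class $d(v)$ that $v$ totally dominates. I would then prove a local inequality: in any six consecutive vertices of $C_n$ the assignment $d(\cdot)$ together with the properness of $f$ forces the appearance of at least four distinct colors, which summed over $\lfloor n/6\rfloor$ disjoint runs gives the leading $4\lfloor n/6\rfloor$ term. The boundary behavior is governed by a residue analysis: in the residues $r\in\{0,1,2,4\}$ no color can be shared across the cyclic wrap, so $r$ additional colors are forced; in the residues $r\in\{3,5\}$ the shape of the remainder admits a single light pair that simultaneously serves the last and the first portion of the cycle, shaving one color and producing the $-1$ in the formula.

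The main obstacle is the lower-bound step: formulating the correct local inequality that forces four colors per block of six (the right notion is that a run of six consecutive vertices must contain at least two light singletons or one light singleton and one light pair in suitable positions), and then executing the boundary case analysis modulo $6$ cleanly enough that the savings provably occur in exactly the residues $r=3$ and $r=5$ and nowhere else. By contrast, once the block pattern is identified the upper-bound constructions reduce to routine verification that each advertised class is light and is totally dominated by at least one vertex.
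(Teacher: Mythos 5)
Your overall strategy --- periodic upper-bound constructions in blocks of six plus a ``four colours per six consecutive vertices'' lower bound --- is exactly the route the paper takes (your block pattern $(a,b,a,c,d,c)$ is the paper's ``way 2''), and your opening observation that every vertex must totally dominate a \emph{light} class (a singleton, or the exact pair $N(v)$) is correct and is the right structural tool. The problem is that the lower bound, which you yourself flag as the main obstacle, is left as a plan rather than a proof, and the plan as stated has a real hole: even if you prove that every window of six consecutive vertices meets at least four colour classes, summing over $\lfloor n/6\rfloor$ disjoint windows does \emph{not} give $\chi_d^t(C_n)\ge 4\lfloor n/6\rfloor$, because nothing yet prevents the same four colours from reappearing in every window. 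To close this you must push the light-class observation further: each of the four classes forced in a window has to be identified as a light class whose entire support (one vertex, or two vertices at distance two) lies inside or immediately adjacent to that window, so that the classes counted in different windows are genuinely distinct. Neither this globalization step nor the residue analysis (why exactly one colour is saved when $r\in\{3,5\}$ and none when $r\in\{1,2,4\}$) is actually carried out; as written, the latter is a restatement of the conclusion. To be fair, the paper's own proof is equally thin at precisely these points --- it asserts minimality in each case after exhibiting the colouring --- so you have not overlooked an argument that the paper supplies, but you have not supplied one either.

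A small concrete symptom of the unfinished boundary analysis: for $n=3$ the stated formula gives $4\lfloor 3/6\rfloor+3-1=2$, while your own light-class observation shows immediately that $\chi_d^t(C_3)=3$ (in $C_3$ no vertex's neighbourhood is independent, so every class serving a vertex is a singleton, and properness already forces three colours). So $n=3$ must be excluded or treated separately, which neither your proposal nor the paper does. The upper-bound half of your proposal is fine and matches the paper's constructions essentially verbatim; the work that remains is entirely on the lower bound.
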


\begin{proof}
Let $V(C_n)=\{v_i\mbox{ }|\mbox{ }1\leq i \leq n\}$, and let $v_iv_j\in E(C_n)$ if and only if $|i-j|=1$ (to modulo $n$).
We claim that if $f$ is a TDC of $C_n$, then every six consecutive vertices $v_i$, $v_{i+1}$, $v_{i+2}$, $v_{i+3}$, $v_{i+4}$
and $v_{i+5}$ has colored by at least four colors. Trivially, we may assume that some color, say $a$, appear at least two times. We assign colors $a$, $b$, $a$ to vertices $v_i$, $v_{i+1}$, $v_{i+2}$, respectively. We can assign color $b$ to vertex $v_{i+3}$ or not. In each case, we need to at least two new colors $c$ and $d$ for coloring the remained vertices. Because, in the first case, we have to assign two new colors $c$ and $d$ to vertices $v_{i+4}$ and $v_{i+5}$, respectively, and in the second case, we must assign colors $c$, $d$, $c$ to vertices $v_{i+3}$, $v_{i+4}$, $v_{i+5}$, respectively. Therefore, our claim is proved, and we notice that any six consecutive vertices can be colored by four new colors $a$, $b$, $c$, $d$ in
\[
\emph{way 1: a,b,a,b,c,d},~~~ \mbox{   or   } ~~~\emph{way 2: a,b,a,c,d,c}.
\]
We also notice that in \emph{way} 1: $v_{i+1}\in pn(V_a;f)$, $v_{i+2}\in pn(V_b;f)$, $v_{i+3}\in pn(V_c;f)$, $v_{i+4}\in pn(V_d;f)$, and in \emph{way} 2: $v_{i+1}\in pn(V_a;f)$, $v_{i+2}\in pn(V_b;f)$, $v_{i+4}\in pn(V_c;f)$, $v_{i+3}\in pn(V_d;f)$. We continue our proof in the following six cases.

\textbf{Case 0: } $n\equiv 0\pmod{6}$. In this case, if $f_0$ is a proper coloring which is obtained by each of ways 1 or 2 or by combining of them, then $f_0$ will be a TDC of $C_n$ with the minimum number $4\lfloor \frac{n}{6}\rfloor$ color classes, as desired.

\textbf{Case 1: } $n\equiv 1\pmod{6}$. In this case, let $f_0$ be the TDC of $C_n-\{v_n\}$ mentioned in Case 0. Since we need to one new color for coloring $v_n$, by assigning a new color $\varepsilon$ to $v_n$ we obtain a TDC of $C_n$ with the minimum number $4\lfloor \frac{n}{6}\rfloor +1$ color classes, as desired.

\textbf{Case 2: } $n\equiv 2\pmod{6}$. In this case, let $f_0$ be the TDC of $C_n-\{v_{n-1},v_n\}$ mentioned in Case 0. Since we need to two new colors for coloring $v_{n-1}$ and $v_n$, by assigning two new colors $\theta$, $\varepsilon$ to $v_{n-1}$, $v_n$, respectively, we obtain a TDC of $C_n$ with the minimum number $4\lfloor \frac{n}{6}\rfloor +2$ color classes, as desired.

\textbf{Case 3: } $n\equiv 3\pmod{6}$. In this case, let $f_0$ be the TDC of $C_n-\{v_{n-2},v_{n-1},v_n\}$ mentioned in Case 0. Since we need to two new colors for coloring $v_{n-2}$, $v_{n-1}$ and $v_n$, by assigning new colors $\varepsilon$, $\theta$, $\varepsilon$ to $v_{n-2}$, $v_{n-1}$, $v_n$, respectively, we obtain a TDC of $C_n$ with the minimum number $4\lfloor \frac{n}{6}\rfloor +2$ color classes, as desired.

\textbf{Case 4: } $n\equiv 4\pmod{6}$. In this case, let $f_0$ be the TDC of $C_n-\{v_{n-3},v_{n-2},v_{n-1},v_n\}$ mentioned in Case 0. Since we need to four new colors for coloring $v_{n-3}$, $v_{n-2}$, $v_{n-1}$ and $v_n$, by assigning new four colors $\pi$, $\varsigma$, $\theta$, $\varepsilon$ to $v_{n-3}$, $v_{n-2}$, $v_{n-1}$, $v_n$, respectively, we obtain a TDC of $C_n$ with the minimum number $4\lfloor \frac{n}{6}\rfloor +4$ color classes, as desired.

\textbf{Case 5: } $n\equiv 5\pmod{6}$. In this case, let $f_0$ be the TDC of $C_n-\{v_{n-4},v_{n-3},v_{n-2},v_{n-1},v_n\}$ mentioned in Case 0. Since we need to four new colors for coloring $v_{n-4}$, $v_{n-3}$, $v_{n-2}$, $v_{n-1}$, $v_n$, by assigning new colors $\pi$, $\varsigma$, $\pi$, $\theta$, $\varepsilon$ to the vertices $v_{n-4}$, $v_{n-3}$, $v_{n-2}$, $v_{n-1}$ and $v_n$, respectively, we obtain a TDC of $C_n$ with the minimum number $4\lfloor \frac{n}{6}\rfloor +4$ color classes, as desired.
\end{proof}

%---------------------------------------------------------------------------------------------------------------

\begin{prop}
\label{chi_dt P_n} Let $P_n$ be a path of order $n\geq 2$. Then
\begin{equation*}
\chi_d^t(P_n)=\left\{
\begin{array}{ll}
2\lceil \frac{n}{3}\rceil -1   & \mbox{if } n\equiv 1\pmod{3},\\
2\lceil \frac{n}{3}\rceil & \mbox{otherwise}.
\end{array}
\right.
\end{equation*}
\end{prop}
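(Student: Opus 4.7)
The plan is to prove the upper and lower bounds on $\chi_d^t(P_n)$ separately; it is convenient to notice that the piecewise formula in the statement equals the uniform expression $\chi_d^t(P_n)=\lceil 2n/3\rceil$, and I will establish both inequalities in this form.

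For the upper bound I would exhibit an explicit TDC of $P_n$ via a block decomposition. The workhorse is a three-vertex block $\{v_{3j-2},v_{3j-1},v_{3j}\}$ colored with two fresh colors in the pattern $(a_j,b_j,a_j)$; inside such a block, $v_{3j-1}$ dominates the useful pair $\{v_{3j-2},v_{3j}\}$ and each of $v_{3j-2}$, $v_{3j}$ dominates the singleton $\{v_{3j-1}\}$. When $n=3k$, using $k$ such blocks yields a TDC with $2k$ colors. When $n=3k+2$, I would append a $P_2$-tail $\{v_{3k+1},v_{3k+2}\}$ with two new singleton colors, each tail vertex dominating the other's singleton, for a total of $2k+2$ colors. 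When $n=3k+1$, since the single leftover vertex cannot be absorbed by a standard block, I would use $k-1$ blocks together with a $P_4$-tail on $\{v_{3k-2},\ldots,v_{3k+1}\}$ colored $(c,d,e,c)$, where $v_{3k-2}$ and $v_{3k+1}$ form a non-useful pair while $v_{3k-1}$ and $v_{3k}$ are singletons, giving $2k+1$ colors in total.

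For the lower bound I would induct on $n$ in steps of three. The base cases $n\in\{2,3,4,5\}$ are handled directly, using throughout the basic observation that $\deg(v_1)=1$ forces the witness class of $v_1$ to equal $\{v_2\}$, so that $v_2$ must be a singleton color class (and symmetrically $v_{n-1}$). For the inductive step with $n\geq 6$, given a TDC $f$ of $P_n$ with $c$ colors, I would split cases according to the colors of $v_1$ and $v_3$: if $c(v_1)=c(v_3)$, then since $v_2$'s witness class lies inside $N(v_2)=\{v_1,v_3\}$ the color class of $v_1$ must equal $\{v_1,v_3\}$, a useful pair exclusive to those two vertices; otherwise at least one of $v_1,v_3$ is a singleton, supplying the witness for $v_2$. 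In every case I would remove $v_1,v_2,v_3$ and argue that the coloring restricted to $v_4,\ldots,v_n$ is, after a small repair if necessary, a TDC of $P_{n-3}$ using at most $c-2$ colors, so that the induction hypothesis $\chi_d^t(P_{n-3})\geq\lceil 2n/3\rceil-2$ yields $c\geq\lceil 2n/3\rceil$.

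The main obstacle I anticipate is checking that the restriction really is a TDC: the new left endpoint $v_4$ in $P_{n-3}$ has only $v_5$ as a neighbor, so its witness class is forced to be $\{v_5\}$, and this demands $v_5$ be a singleton in the restriction. When $v_4$'s original witness in $f$ was either $\{v_5\}$ or the useful pair $\{v_3,v_5\}$, the requirement is met for free: in the pair subcase the removal of $v_3$ automatically collapses the pair into the singleton $\{v_5\}$. The delicate subcase is when $v_4$'s witness in $f$ was $\{v_3\}$, with $v_3$ itself a singleton of $f$ and $v_5$ not a singleton; the naive removal strips three colors but leaves $v_4$ stranded, and I would repair by reassigning $v_5$ a brand-new color, which converts $v_5$ into a singleton in the restriction at a cost of exactly one color, so the net effect is still a TDC of $P_{n-3}$ with $c-2$ colors. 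A handful of further routine checks on the witnesses of $v_5$ and $v_6$ under this repair must be verified, but once this boundary bookkeeping is handled the induction closes and the formula follows.
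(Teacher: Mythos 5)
The paper itself states this proposition without proof (it is imported from reference \cite{Kaz}), so there is no in-paper argument to compare against; judged on its own terms, your upper-bound constructions (the $(a_j,b_j,a_j)$ blocks with the $P_2$- and $P_4$-tails) are correct and give exactly $\lceil 2n/3\rceil$ colors, and your identification of the piecewise formula with $\lceil 2n/3\rceil$ is right. The problem is in the lower-bound induction, precisely in the ``delicate subcase'' you single out. There you assert that the naive removal of $v_1,v_2,v_3$ strips \emph{three} colors, but only two are guaranteed: $v_2$ and $v_3$ are forced singleton classes, while nothing forces the color of $v_1$ to live entirely inside $\{v_1,v_2,v_3\}$ --- it may be shared with $v_4$ or with any later non-neighbor of $v_1$, and the TDC conditions in this subcase are all discharged by $\{v_2\}$ and $\{v_3\}$ without making $v_1$ a singleton.

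A concrete instance: on $P_7$ take the coloring with classes $V_1=\{v_2\}$, $V_2=\{v_3\}$, $V_3=\{v_6\}$, $V_4=\{v_1,v_4\}$, $V_5=\{v_5,v_7\}$. This is a proper TDC with $c=5=\chi_d^t(P_7)$ colors ($v_1,v_3\succ V_1$; $v_2,v_4\succ V_2$; $v_5,v_7\succ V_3$; $v_6\succ V_5$), it lands exactly in your delicate subcase ($v_4$'s only witness is $\{v_3\}$, $v_5$ is not a singleton), and deleting $v_1,v_2,v_3$ kills only colors $1$ and $2$ because color $4$ survives on $v_4$. Your repair then spends a fresh color on $v_5$ and produces a TDC of $P_4$ with $4=c-1$ colors, not $c-2$; the induction hypothesis then yields only $c\geq\chi_d^t(P_4)+1=4$, one short of the required $5$. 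So the step as written fails, and the example shows the ``three colors stripped'' claim cannot be rescued by a sharper count; this subcase needs a genuinely different treatment (a repair that costs no new color, or a reduction that does not go through deleting $v_1,v_2,v_3$ when $v_1$'s class is not a singleton).
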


%---------------------------------------------------------------------------------------------------------------
\begin{prop}
\label{chi_d^t Cn Complement} Let $\overline{C_n}$ be the complement of the cycle $C_n$ of order $n\geq 4$. Then
\begin{equation*}
\chi_d^t(\overline{C_n})=\left\{
\begin{array}{ll}
4                                & \mbox{if }n=4,5, \\
\lceil \frac{n}{2}\rceil & \mbox{if }n\geq 6.
\end{array}
\right.
\end{equation*}
\end{prop}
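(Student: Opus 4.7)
The plan is to exploit the fact that independent sets in $\overline{C_n}$ are cliques of $C_n$, hence have size at most $2$ when $n\ge 4$, and when they have size exactly $2$ they consist of two consecutive vertices $v_i,v_{i+1}$ of the cycle. This structural observation controls every proper coloring, hence every TDC, of $\overline{C_n}$.

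For the main range $n\ge 6$, the lower bound is essentially immediate: since every color class of a proper coloring has size $\le 2$, any TDC must use at least $\lceil n/2\rceil$ colors, so $\chi_d^t(\overline{C_n})\ge \chi(\overline{C_n})\ge \lceil n/2\rceil$. For the upper bound I would construct an explicit TDC. Label $V(\overline{C_n})=\{v_1,\dots,v_n\}$ so that the missing edges of $\overline{C_n}$ are precisely the consecutive pairs $v_iv_{i+1}$ (indices mod $n$). When $n$ is even, set $V_i=\{v_{2i-1},v_{2i}\}$ for $1\le i\le n/2$; when $n$ is odd, use the pairs $V_i=\{v_{2i-1},v_{2i}\}$ for $1\le i\le (n-1)/2$ together with the singleton $V_{(n+1)/2}=\{v_n\}$. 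Each $V_i$ is independent in $\overline{C_n}$, so the coloring is proper. To verify the total-domination condition it suffices to note that the only classes a given vertex $v_j$ fails to dominate are those intersecting $\{v_{j-1},v_{j+1}\}$, and since $n\ge 6$ we have enough remaining classes to find one that $v_j$ totally dominates; I would simply check the three or four affected classes per vertex.

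For $n=5$, I would invoke the identity $\overline{C_5}\cong C_5$ and apply Proposition \ref{chi_dt C_n} with $n=5\equiv 5\pmod 6$ to obtain $\chi_d^t(\overline{C_5})=4\lfloor 5/6\rfloor+4=4$. For $n=4$, observe that $\overline{C_4}\cong 2K_2$, so every vertex $v$ has a unique neighbor $v'$; in any TDC, the class totally dominated by $v$ must be contained in $N(v)=\{v'\}$, forcing $\{v'\}$ itself to be a color class. This applies to all four vertices, so every color class is a singleton and $\chi_d^t(\overline{C_4})=4$.

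I expect the only subtle step to be the verification that the explicit construction for $n\ge 6$ actually satisfies the TDC condition at every vertex; the boundary vertices $v_1$, $v_{n-1}$, $v_n$ (in the odd case) have to be handled with slightly more care than the interior ones, and the hypothesis $n\ge 6$ is used precisely to guarantee that at least one class survives after excluding the up to three forbidden classes near a given vertex. Everything else is either the chromatic-number lower bound or a direct reduction to previously established facts.
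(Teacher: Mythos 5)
The paper itself gives no proof of this proposition: it is one of the results quoted from the author's earlier submission \cite{Kaz}, so there is no in-paper argument to compare yours against. Judged on its own, your proof is correct and complete. The structural observation that color classes of $\overline{C_n}$ are cliques of $C_n$, hence singletons or consecutive pairs $\{v_i,v_{i+1}\}$, immediately gives the lower bound $\chi_d^t(\overline{C_n})\ge\chi(\overline{C_n})\ge\lceil n/2\rceil$; your explicit pairing is exactly the coloring the paper later reuses in Proposition \ref{Cn Complement in Class 1}, and the domination check goes through because a vertex $v_j$ excludes at most $2$ classes when $n$ is even and at most $3$ when $n$ is odd (the classes meeting $\{v_{j-1},v_j,v_{j+1}\}$), while there are $\lceil n/2\rceil\ge 3$, respectively $\ge 4$, classes available. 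The reductions for the exceptional cases ($\overline{C_5}\cong C_5$ via Proposition \ref{chi_dt C_n}, and $\overline{C_4}\cong 2K_2$ forcing all four singleton classes) are also sound. One small imprecision worth fixing: you describe the forbidden classes for $v_j$ as ``those intersecting $\{v_{j-1},v_{j+1}\}$,'' but the class containing $v_j$ itself is always forbidden too, and for the singleton $\{v_n\}$ in the odd case that class does not meet $\{v_{n-1},v_1\}$; your later count of ``up to three forbidden classes'' already absorbs this, so the bound is unaffected, but the characterization sentence should include $v_j$ in the excluded set.
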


%---------------------------------------------------------------------------------------------------------------
\begin{prop}
\label{chi_d^t Pn Complement} Let $\overline{P_n}$ be the complement of the path $P_n$ of order $n\geq 4$. Then
\begin{equation*}
\chi_d^t(\overline{P_n})=\left\{
\begin{array}{ll}
3                                & \mbox{if }n=4, \\
\lceil \frac{n}{2}\rceil & \mbox{if }n\geq 5.
\end{array}
\right.
\end{equation*}
\end{prop}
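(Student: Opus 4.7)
The plan is to handle $n=4$ separately and to establish matching bounds for $n\geq 5$. For $n=4$, a direct check shows that $\overline{P_4}\cong P_4$ (the relabelling $v_2,v_4,v_1,v_3$ traces a Hamilton path in the complement and accounts for all three edges); hence Proposition \ref{chi_dt P_n} with $n=4\equiv 1\pmod 3$ yields $\chi_d^t(\overline{P_4})=2\lceil 4/3\rceil-1=3$.

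For $n\geq 5$ the lower bound is a clique-cover argument on $P_n$: every proper coloring of $\overline{P_n}$ partitions the vertex set into cliques of $P_n$, and since $P_n$ is triangle-free each color class is either a singleton or an edge $\{v_i,v_{i+1}\}$. If $p$ denotes the number of edge classes and $s$ the number of singleton classes, then $2p+s=n$ and the total number of colors is $p+s=n-p\geq n-\lfloor n/2\rfloor=\lceil n/2\rceil$. This gives $\chi_d^t(\overline{P_n})\geq\chi(\overline{P_n})\geq\lceil n/2\rceil$.

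For the matching upper bound I would exhibit the explicit coloring with color classes
\[
V_i=\{v_{2i-1},v_{2i}\}\text{ for }1\leq i\leq\lfloor n/2\rfloor,\text{ and }V_{\lceil n/2\rceil}=\{v_n\}\text{ when }n\text{ is odd},
\]
which uses $\lceil n/2\rceil$ classes and is automatically proper since each class is a clique of $P_n$. The main obstacle is then the total-dominator condition: for each $v_j$ one must locate a color class $V_k$ whose vertices all lie at $P_n$-distance at least $2$ from $v_j$ (equivalently, are adjacent to $v_j$ in $\overline{P_n}$). For $j\geq 4$ the class $V_1=\{v_1,v_2\}$ works, and for $j\leq n-3$ one may take any $V_i$ with $2i-1\geq j+2$; together these cover every $v_j$ once $n\geq 7$. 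The remaining short cases $n=5,6$ and the near-boundary indices $j\in\{2,3,n-2,n-1\}$ are dispatched by hand: for $n=5$ the singleton $\{v_5\}$ dominates both $v_2$ and $v_3$, while for $n=6$ the pair $\{v_5,v_6\}$ dominates $v_2,v_3$ and symmetrically $\{v_1,v_2\}$ dominates $v_4,v_5$. The same analysis also clarifies why $n=4$ is exceptional: no later pair is available to dominate $v_2$, so two color classes never suffice and the value jumps from $\lceil 4/2\rceil=2$ up to $3$.
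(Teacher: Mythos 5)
Your argument is correct and complete. Note that the paper does not actually prove this proposition: it is quoted from the reference [Kaz] as background, so there is no in-paper proof to compare against. Your route is the natural one and it checks out. The case $n=4$ is handled by the self-complementarity $\overline{P_4}\cong P_4$ together with Proposition \ref{chi_dt P_n}, which correctly gives $3$. For $n\geq 5$ your lower bound is the clique-cover count: color classes of $\overline{P_n}$ are cliques of the triangle-free graph $P_n$, hence singletons or edges, forcing at least $n-\lfloor n/2\rfloor=\lceil n/2\rceil$ classes, and $\chi_d^t\geq\chi$ finishes it. Your upper-bound coloring by consecutive pairs $V_i=\{v_{2i-1},v_{2i}\}$ (plus $\{v_n\}$ for odd $n$) is the same construction the paper later uses in Propositions \ref{Cn Complement in Class 1} and \ref{Pn Complement in Class 1}, and your verification of the total-dominator condition is sound: $v_j\succ V_k$ exactly when $j\leq 2k-3$ or $j\geq 2k+2$, so $V_1$ handles all $j\geq 4$, $V_3$ (or the singleton $\{v_5\}$ when $n=5$) handles $j\leq 3$, and I confirmed the small cases $n=5,6$ by hand as you indicate. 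The only cosmetic slip is the closing sentence about $n=4$: the claim that "two color classes never suffice" is true but is not what forces the value $3$ rather than $\lceil 4/2\rceil=2$ --- the correct reason is already contained in your isomorphism argument, so that remark is harmless but redundant.
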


%----------------------------------------------------------------  Main results -------------------------------------------------------

\section{\bf Main results}
\vskip 0.4 true cm

The following theorem shows that the total dominator chromatic
number of the Mycieleskian of a graph $G$ lies between $\chi_d^t(G)+1$ and $\chi_d^t(G)+2$.

%----------------------------------------------------------------------------------------------------------------

\begin{thm}
\label{chi_d^t G+1=<chi_d^t M(G)=<chi_d^t G+2} For any graph $G$ with $\delta(G)\geq 1$, $\chi_d ^t(G)+1 \leq \chi_d ^t(M(G)) \leq \chi_d ^t(G)+2$.
\end{thm}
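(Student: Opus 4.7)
For the upper bound $\chi_d^t(M(G)) \leq \chi_d^t(G)+2$, the plan is to extend a $\chi_d^t(G)$-coloring $(V_1,\dots,V_\ell)$ of $G$ to $M(G)$ by assigning every vertex of $U$ one new color $\ell+1$ and the vertex $w$ a further new color $\ell+2$. Using a single color on all of $U$ is legitimate because $U$ is independent in $M(G)$. After verifying properness (the only new adjacencies are of the form $u_j v_i$ or $u_j w$, and both pairs receive different colors), I would check the total-dominator property in three groups: each $v_i$ retains its $G$-TD class $V_k$ because $E(G)\subseteq E(M(G))$; each $u_j$ has $\{w\}$ (the class of color $\ell+2$) as a TD class because $u_j w \in E(M(G))$; and $w$ has the entire set $U$ (the class of color $\ell+1$) as a TD class because $w$ is adjacent to every $u_j$.

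For the lower bound $\chi_d^t(M(G)) \geq \chi_d^t(G)+1$, let $f$ be a $\chi_d^t(M(G))$-coloring with $k$ colors. Since $N_{M(G)}(w)=U$, the TD class $C_w$ of $w$ satisfies $C_w \subseteq U$, so its color $c^*$ appears on no vertex of $V$. Hence $f|_V$ is a proper coloring of $G$ using at most $k-1$ colors, and it suffices to verify that $f|_V$ is itself a TDC of $G$. For each $v_i \in V$, I plan to exhibit a color class of $f|_V$ contained in $N_G(v_i)$ via a two-stage fallback. First, exploit $v_i$'s TD class $C_i$ in $M(G)$: if $C_i \cap V \neq \emptyset$, then $C_i \cap V$ is precisely the $f|_V$-class of its color (because $C_i$ is the entire $M(G)$-color class) and lies in $N_{M(G)}(v_i)\cap V = N_G(v_i)$. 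Otherwise $C_i \subseteq U$, in which case I invoke $u_i$'s TD class $C_i^u$ in $M(G)$; because $N_{M(G)}(u_i) \cap V = N_G(v_i)$, whenever $C_i^u$ meets $V$ its $V$-part supplies the required color class of $f|_V$ inside $N_G(v_i)$.

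The hard part will be the residual subcase in which $C_i \subseteq U$ and $C_i^u = \{w\}$ hold simultaneously: then $c_w=f(w)$ is used only on $w$, so $f|_V$ in fact misses at least the two colors $c^*$ and $c_w$, yet neither fallback directly supplies a TD class for $v_i$ in $f|_V$. I expect the author either to rule this configuration out in a minimum TDC by a local recoloring argument (using $\delta(G)\geq 1$ to guarantee every $u_j$ has a $V$-neighbor and hence alternative TD options), or to construct from $f$ a substitute TDC of $G$ with at most $k-1$ colors by exploiting the spare color $c_w$ that becomes available in this subcase.
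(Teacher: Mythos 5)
Your upper bound is exactly the paper's: the author also takes $g=(V_1,\dots,V_\ell,U,\{w\})$, giving all of $U$ one new colour and $w$ another, and the verification you outline is complete and correct.

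For the lower bound, your proposal stops precisely where the difficulty begins, and that difficulty is real. The paper's own argument is the one-line assertion that, since $N_{M(G)}(w)=U$, some $u_i$ carries a colour not used on $V(G)$, ``hence'' $\chi_d^t(M(G))\geq\chi_d^t(G)+1$; this tacitly assumes that any minimum TDC of $M(G)$ uses at least $\chi_d^t(G)$ colours on $V(G)$, equivalently that its restriction to $V(G)$ is (essentially) a TDC of $G$ --- the same assumption reappears in the converse direction of the Class~1 characterization (Theorem 2.3). Your two-stage fallback is an honest attempt to justify this, but the residual subcase you isolate ($C_i\subseteq U$ together with $C_i^u=\{w\}$) cannot be ruled out, even in a minimum colouring. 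Concretely, for $G=C_6$ the partition of $V(M(C_6))$ into the classes $\{v_1,v_3,u_1,u_3\}$, $\{v_2,v_4,v_6\}$, $\{v_5,u_5\}$, $\{u_2,u_6\}$, $\{u_4\}$, $\{w\}$ is a TDC with $6=\chi_d^t(M(C_6))$ colours in which $v_1$ is adjacent to no full colour class except $\{u_2,u_6\}\subseteq U$ and $u_1$ is adjacent to no full colour class except $\{w\}$; moreover the restriction to $V$ uses only three colours and is not a TDC of $C_6$, whose total dominator chromatic number is $4$. So neither a local recolouring that excludes the configuration nor the bare restriction can close the argument: one genuinely has to manufacture from $f$ a TDC of $G$ with at most $k-1$ colours (or find an entirely different argument), and neither your proposal nor the paper supplies that construction. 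As written, your proof of $\chi_d^t(M(G))\geq\chi_d^t(G)+1$ --- like the paper's --- is incomplete, though the inequality itself is consistent with every example computed in the paper.
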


\begin{proof}
Let $f=(V_1,V_2,...,V_{\ell})$ be a $\chi_d^t$-coloring of $G$. Since $g=(V_1,V_2,...,V_{\ell},U,W)$ is a TDC of $M(G)$, we obtain $\chi_d ^t(M(G)) \leq \chi_d ^t(G)+2$. On the other hand, Since $N_{M(G)}(w)=U$, there exists a vertex $u_i\in U$ that is colored by a color different of the colors used in $V(G)$. Hence $\chi_d ^t(M(G)) \geq \chi_d ^t(G)+1$.
\end{proof}
%----------------------------------------------------------------------------------------------------------------

Next results will characterize graphs $G$ satisfying $\chi_d ^t(M(G))=\chi_d ^t(G)+1$ or $\chi_d ^t(M(G))=\chi_d ^t(G)+2$. First a definition.
%----------------------------------------------------------------------------------------------------------------

\begin{defn}
\emph{We say that a graph $G$ belongs to} Class 1 \emph{if it has a $\chi_d^t$-coloring $f=(V_1,V_2,...,V_{\ell})$ such that $pn_G(V_i;f)=\emptyset$ for some $1 \leq i \leq \ell$, and it belongs to} Class 2 \emph{otherwise}.
\end{defn}
%------------------------------------------------------------------------------------------------------------------

The following two theorems present necessary and sufficient conditions for that $\chi_d ^t(M(G))$ be $\chi_d ^t(G)+1$ or $\chi_d ^t(G)+2$.
%----------------------------------------------------------------------------------------------------------------

\begin{thm}
\label{chi M(G)=chi G+1} A graph $G$ with $\delta(G)\geq 1$ belongs to Class 1 if and only if $\chi_d ^t(M(G))=\chi_d ^t(G)+1$.
\end{thm}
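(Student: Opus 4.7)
The proof handles the two directions separately, leveraging the upper bound from Theorem~\ref{chi_d^t G+1=<chi_d^t M(G)=<chi_d^t G+2}.

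For the forward implication, I would assume $G$ is in Class~1 with witness $f=(V_1,\dots,V_\ell)$ satisfying $pn_G(V_i;f)=\emptyset$ for some $i$, and build a coloring $g$ of $M(G)$ by copying $f$ on $V$, giving every $u_j\in U$ a new color $\ell+1$, and coloring $w$ with the old color $i$. Properness is immediate: $U$ is independent, every $V$-$U$ edge has colors in $\{1,\dots,\ell\}$ versus $\ell+1$, and every $U$-$w$ edge has colors $\ell+1$ versus $i$. For total domination, the Class~1 hypothesis yields, for each $v_j\in V$, some index $k\ne i$ with $v_j\succ V_k$ in $G$ (either the original $f$-dominator of $v_j$ is not $V_i$, or $v_j\succ V_i$ and one swaps using the empty private neighborhood). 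Since $k\ne i$, the color-$k$ class of $g$ is still exactly $V_k$, which lies in $N_G(v_j)$ and therefore dominates both $v_j$ and $u_j$ in $M(G)$. Finally, $w$ is dominated by the color-$(\ell+1)$ class, which is all of $U$. Combined with the lower bound $\chi_d^t(M(G))\ge\ell+1$, this yields $\chi_d^t(M(G))=\ell+1$.

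For the converse, I would fix a $\chi_d^t$-coloring $g=(W_1,\dots,W_{\ell+1})$ of $M(G)$. Since $N_{M(G)}(w)=U$, some color class $W_{k_0}$ lies inside $U$ and totally dominates $w$, so color $k_0$ is absent from $V$ and the restriction $f:=g|_V$ uses at most $\ell$ colors. Let $k_w$ be the color of $w$ in $g$ and write $V_k:=W_k\cap V$. The plan is to show that $f$ (after dropping empty classes) is a $\chi_d^t$-coloring of $G$ with $pn_G(V_{k_w};f)=\emptyset$, thereby exhibiting $G$ as a Class~1 graph. The emptiness of the private neighborhood is the cleanest step: for any $v\in pn_G(V_{k_w};f)$, the $M(G)$-dominator $W_m$ of $v$ satisfies $V_m\subseteq N_G(v)$ (because $w\notin N_{M(G)}(v)$ rules out a $w$-component of $W_m$); granted $V_m\ne\emptyset$, the privacy condition forces $m=k_w$, and then $w\in W_{k_w}\subseteq N_{M(G)}(v)$ is the desired contradiction.

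The main obstacle is to verify that $f$ is a TDC of $G$ with exactly $\ell$ colors, which reduces to showing that every $u_j\in U$ (and thus every $v_j\in V$) has a total-dominator class $W_m$ in $g$ with nonempty $V$-part. The easy case is when $w$'s class $W_{k_w}$ is non-singleton: any dominator $W_m$ of $u_j$ lies in $N_{M(G)}(u_j)=N_G(v_j)\cup\{w\}\subseteq V\cup\{w\}$, and since $\{w\}$ is not a color class, $V_m=W_m\cap V$ is nonempty and contained in $N_G(v_j)$, giving a total-dominator $V_m$ of $v_j$ in $G$. When $W_{k_w}=\{w\}$ is a singleton, one has $|B|\ge 2$ for $B:=\{k:V_k=\emptyset\}$, and I would exploit the optimality of $g$: any merger of two $U$-classes, or any migration of a $V$-vertex into $\{w\}$, must break total domination of some vertex, and a careful bookkeeping of these broken dominators produces a TDC of $M(G)$ with only $\ell$ colors, contradicting $\chi_d^t(M(G))=\ell+1$. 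This case analysis is the technical heart of the proof.
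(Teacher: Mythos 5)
Your forward direction is correct and is essentially the paper's own construction: place $w$ into the color class $V_i$ whose private neighborhood is empty and give $U$ one new color. Your verification that every $v_j\in V$ still totally dominates some class $V_k$ with $k\neq i$ is exactly where the Class~1 hypothesis enters, and you carry it out correctly; this half needs no changes.

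The converse, however, contains a genuine gap that you flag but do not close. Everything hinges on showing that the restriction of an optimal coloring $g$ of $M(G)$ to $V$ is a TDC of $G$ with exactly $\ell$ colors in which the class $V_{k_w}$ coming from $w$'s class has empty private neighborhood. Two obstructions remain. First, a vertex $v_j\in V$ may a priori be totally dominated in $M(G)$ only by classes contained in $U$ (its $M(G)$-neighborhood contains the copies $u_k$ of its $G$-neighbors), so the proviso ``granted $V_m\neq\emptyset$'' in your private-neighborhood computation is precisely the assertion that needs proof and is never discharged; the $u_j$-trick does not repair it, since $u_j$'s only dominator could be $W_{k_w}$ itself, in which case $v_j$ could still dominate $V_{k_w}$ and nothing else with nonempty $V$-part. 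Second, the case $W_{k_w}=\{w\}$ is dispatched with ``a careful bookkeeping of these broken dominators produces a TDC of $M(G)$ with only $\ell$ colors,'' but no such bookkeeping is exhibited, and it is not routine: merging two $U$-classes always preserves properness, so one must actually analyze which vertices of $V$ can have a $U$-class as their unique dominator in order to extract a contradiction. To be fair, the paper's own converse is equally terse at exactly these points --- it simply declares ``we may assume'' that $w$'s class is not a singleton and that the restriction is a TDC --- so you have correctly located where the real work lies; but locating it is not the same as doing it, and as written your proof of the converse is incomplete.
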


\begin{proof}
Let $G$ be a graph in Class 1, and let $f=(V_1,V_2,...,V_{\ell})$ be a $\chi_d^t$-coloring of $G$ such that $pn_G(V_i;f)=\emptyset$ for some $1 \leq i \leq \ell$. Then $g=(V_1, ...,V_{i-1}, V_i\cup\{ w\},V_{i+1},...,V_{\ell},U)$ is a TDC of $M(G)$, and Theorem \ref{chi_d^t G+1=<chi_d^t M(G)=<chi_d^t G+2} implies $\chi_d ^t(M(G))=\chi_d ^t(G)+1$.

Conversely, let $\chi_d ^t(M(G))=\chi_d ^t(G)+1$. By the previous discussion in the proof of Theorem \ref{chi_d^t G+1=<chi_d^t M(G)=<chi_d^t G+2}, we may assume that $f=(V_1,V_2,...,V_{\ell})$ is a $\chi_d^t$-coloring of $M(G)$ such that $w\in V_1$, $V_1-\{w\}\neq \emptyset$ and $V_{\ell}\subseteq U$. Since $w\in V_1$ and there is no edge between $w$ and $V(G)$, we conclude that $v\not\succ V_1$ for any $v\in V(G)$. Therefore the restriction of $f$ on $V(G)$ gives a TDC $g=(V_1-\{w\}-U,V_2-U,...,V_{\ell-1}-U)$ such that $v\not\succ V_1-\{w\}-U$ for any $v\in V(G)$. Hence $pn_G(V_1-\{w\}-U;g)=\emptyset$
and $G$ belongs to Class 1.
\end{proof}
%----------------------------------------------------------------------------------------------------------------

As an immediate consequence, we have the following.
%----------------------------------------------------------------------------------------------------------------

\begin{thm}
\label{chi M(G)=chi G+2} A graph $G$ with $\delta(G)\geq 1$ belongs to Class 2 if and only if $\chi_d ^t(M(G))=\chi_d ^t(G)+2$.
\end{thm}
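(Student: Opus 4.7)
The plan is to derive this characterization directly from Theorem \ref{chi M(G)=chi G+1} combined with the two-sided bound of Theorem \ref{chi_d^t G+1=<chi_d^t M(G)=<chi_d^t G+2}, with essentially no new work required — which is consistent with the author's phrase \emph{``As an immediate consequence.''}

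First I would observe that by definition, Class 1 and Class 2 partition the family of graphs $G$ with $\delta(G) \geq 1$: a graph either has a $\chi_d^t$-coloring with some private-neighborhood-empty color class (Class 1) or it does not (Class 2). In particular, $G$ belongs to Class 2 if and only if $G$ does not belong to Class 1.

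Next I would invoke Theorem \ref{chi_d^t G+1=<chi_d^t M(G)=<chi_d^t G+2}, which forces $\chi_d^t(M(G))$ to take one of exactly two values, namely $\chi_d^t(G)+1$ or $\chi_d^t(G)+2$. Hence $\chi_d^t(M(G)) = \chi_d^t(G)+2$ is logically equivalent to $\chi_d^t(M(G)) \neq \chi_d^t(G)+1$.

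Putting the two observations together with Theorem \ref{chi M(G)=chi G+1}, we obtain the chain of equivalences
\begin{equation*}
G \in \text{Class } 2 \iff G \notin \text{Class } 1 \iff \chi_d^t(M(G)) \neq \chi_d^t(G)+1 \iff \chi_d^t(M(G)) = \chi_d^t(G)+2,
\end{equation*}
which is the desired statement. There is no genuine obstacle here; the only thing to be careful about is making explicit that the bound in Theorem \ref{chi_d^t G+1=<chi_d^t M(G)=<chi_d^t G+2} requires $\delta(G) \geq 1$, which is exactly the hypothesis stated in the theorem, so that the dichotomy between the two possible values of $\chi_d^t(M(G))$ is valid in our setting.
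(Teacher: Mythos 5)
Your proposal is correct and matches the paper's intent exactly: the author gives no separate proof, labeling the result ``an immediate consequence,'' and the intended argument is precisely your chain of equivalences combining the dichotomy from Theorem \ref{chi_d^t G+1=<chi_d^t M(G)=<chi_d^t G+2} with the characterization in Theorem \ref{chi M(G)=chi G+1} and the fact that Classes 1 and 2 partition the graphs under consideration.
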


%-------------------------------------- Characterizing some graphs in Classes 1 and 2--------------------------------------------

\section{\bf Characterizing some graphs in Classes 1 and 2}
\vskip 0.4 true cm

%----------------------------------------------------------------------------------------------------------------
Obviously, any complete graph of order at least 3 belongs to Class 1, and any complete $p$-partite graph belongs to Class 2 if and only if $p=2$.
In the next propositions, we show which of wheels, cycles and paths belong to Class 1 and which belong to Class 2.
%----------------------------------------------------------------------------------------------------------------

\begin{prop}
\label{Wn in Class 1} Let $n\geq 3$ be an integer. Every wheel $W_n$ of order $n+1$ belong to in Class 1. Hence
\begin{equation*}
\chi_d^t(M(W_n))=\left\{
\begin{array}{ll}
4 & \mbox{if }n \mbox{ is even}, \\
5 & \mbox{if }n \mbox{ is odd}.
\end{array}
\right.
\end{equation*}.
\end{prop}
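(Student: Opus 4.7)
The plan is to prove that $W_n$ lies in Class 1 by producing an explicit $\chi_d^t$-coloring one of whose classes has empty private neighborhood. Once this is done, Theorem \ref{chi M(G)=chi G+1} upgrades the inequality of Theorem \ref{chi_d^t G+1=<chi_d^t M(G)=<chi_d^t G+2} to $\chi_d^t(M(W_n))=\chi_d^t(W_n)+1$, and the numerical values are read off from Proposition \ref{chi_dt W_n}.

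Label $V(W_n)=\{c,v_1,\dots,v_n\}$ with hub $c$ and rim $v_1v_2\cdots v_nv_1$, and split on the parity of $n$. If $n$ is even, I put $c$ in color $1$ and $2$-color the rim properly with colors $2,3$, obtaining $f=(V_1,V_2,V_3)$ with exactly $\chi_d^t(W_n)=3$ classes by Proposition \ref{chi_dt W_n}. This is a TDC because every rim vertex is adjacent to $c\in V_1$ and $c$ is adjacent to every vertex of $V_2$. If $n$ is odd, I keep $c$ in color $1$, alternate colors $2,3$ on $v_1,\dots,v_{n-1}$, and set $v_n$ alone in color $4$; the check that this is a proper $4$-coloring is routine, it uses exactly $\chi_d^t(W_n)=4$ classes, and it is again a TDC because every rim vertex dominates $V_1=\{c\}$ while $c$ dominates $V_2$.

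In both constructions I claim $pn_{W_n}(V_2;f)=\emptyset$, which I verify by a single uniform cancellation argument. A vertex $v$ with $v\succ V_2$ is either the hub or a rim vertex. In the first case, $c\succ V_3$ because $V_3$ lies entirely on the rim, so $c\in CN_{W_n}(V_3)$; in the second case $v$ is a rim vertex, hence adjacent to $c$, and so $v\succ V_1=\{c\}$, i.e.\ $v\in CN_{W_n}(V_1)$. Either way $v\in CN_{W_n}(V_j)$ for some $j\ne 2$, which removes it from $pn_{W_n}(V_2;f)$.

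I expect the only potentially delicate point to be the smallest cases $n=3,4$, where $V_2$ and $V_3$ shrink enough that genuine rim vertices actually do enter $CN_{W_n}(V_2)$; fortunately the cancellation above uses only the containments rim${}\subseteq CN_{W_n}(V_1)$ and $c\in CN_{W_n}(V_3)$, both of which hold verbatim for every $n\ge 3$, so these boundary cases need no separate treatment. Combining $W_n\in$ Class 1 with Theorem \ref{chi M(G)=chi G+1} and Proposition \ref{chi_dt W_n} then yields $\chi_d^t(M(W_n))=4$ when $n$ is even and $5$ when $n$ is odd, which is exactly the claimed formula.
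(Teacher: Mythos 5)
Your proof is correct and follows essentially the same route as the paper: both arguments hinge on the hub forming a singleton color class $V_1$, so that every rim vertex dominates $V_1$ and the hub dominates a rim class, forcing $pn(V_2;f)=\emptyset$ and hence $W_n\in$ Class 1, after which the value of $\chi_d^t(M(W_n))$ follows from Theorem \ref{chi M(G)=chi G+1} and Proposition \ref{chi_dt W_n}. You are somewhat more explicit than the paper (which takes an arbitrary $\chi_d^t$-coloring with the hub as a singleton and directly exhibits the TDC of $M(W_n)$), and your uniform cancellation argument correctly disposes of the small cases $n=3,4$.
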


\begin{proof}
Let $V(W_n)=\{1,2,...,n+1\}$, for $n\geq 3$, and let vertex $1$ be of degree $n$. If $f=(\{1\},V_2,...,V_{\ell})$ is a $\chi_d ^t$-coloring of $W_n$, then $f'=(\{1\},V_2\cup\{w\},V_3,...,V_{\ell},U)$ is a TDC of $M(W_n)$. Hence
\begin{equation*}
\chi_d ^t(M(W_n))=\chi_d ^t(W_n)+1=\left\{
\begin{array}{ll}
4 & \mbox{if }n \mbox{ is even}, \\
5 & \mbox{if }n \mbox{ is odd},
\end{array}
\right.
\end{equation*}
by Proposition \ref{chi_dt W_n} and Theorem \ref{chi_d^t G+1=<chi_d^t M(G)=<chi_d^t G+2}.
\end{proof}
%-------------------------------------------------------------------------------------------------------

\begin{prop}
\label{Cn in Class 1} Let $C_n$ be a cycle of order $n\geq 3$. Then $C_n\in Class $ 1 if and only if $n\neq 4,5$ and $n\equiv 4\pmod{6}$.
\end{prop}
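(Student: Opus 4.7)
My plan is to establish the biconditional by proving each direction separately.

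For the sufficiency direction, suppose $n \equiv 4 \pmod{6}$ with $n \neq 4$, so $n \geq 10$. I construct an explicit $\chi_d^t$-coloring of $C_n$ having some color class with empty private neighborhood by following the Case~4 construction in the proof of Proposition \ref{chi_dt C_n}: color $v_1, \ldots, v_{n-4}$ in $\lfloor n/6 \rfloor$ consecutive blocks using \emph{way~1} (with fresh colors $a, b, c, d$ in each block), and assign four new singleton colors $\pi, \varsigma, \theta, \varepsilon$ to $v_{n-3}, v_{n-2}, v_{n-1}, v_n$. This yields a $\chi_d^t$-coloring $f$ with $\chi_d^t(C_n)$ colors. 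I then claim $pn(V_\pi; f) = \emptyset$. The only potential private neighbors of $V_\pi = \{v_{n-3}\}$ are its two neighbors $v_{n-4}$ and $v_{n-2}$; but $v_{n-4}$ also totally dominates the singleton $V_c = \{v_{n-5}\}$ from its block, while $v_{n-2}$ also totally dominates $V_\theta = \{v_{n-1}\}$. Hence $C_n$ belongs to Class~1.

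For the necessity direction, assume $C_n \in$ Class~1 via a $\chi_d^t$-coloring $f = (V_1, \ldots, V_\ell)$ with $pn(V_i; f) = \emptyset$ for some $i$. Since $C_n$ is $2$-regular, any color class totally dominated by a vertex $v$ is contained in $N(v)$, so $|V_j| \leq 2$ for every color class; each $V_j$ is therefore either a singleton or an independent pair at distance two. First, verify the small cases $n \in \{3, 4, 5\}$ directly by enumerating all $\chi_d^t$-colorings up to rotational symmetry. For $n \geq 6$ with $n \not\equiv 4 \pmod 6$, apply the structural observation in the proof of Proposition \ref{chi_dt C_n} that any six consecutive vertices must use at least four colors via either \emph{way~1} or \emph{way~2}, and check that in the canonical tail treatments of Cases~0, 1, 2, 3, and 5, every color class necessarily receives at least one private neighbor (either a ``cap'' vertex whose two neighbors both lie in that class, or an endpoint singleton flanked by a pair-type class on one side).

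The principal obstacle is the necessity direction, where one must rule out empty private neighborhoods across every $\chi_d^t$-coloring whenever $n \bmod 6 \in \{0, 1, 2, 3, 5\}$. The natural approach is a minimality argument: if $pn(V_i; f) = \emptyset$ for some color class $V_i$ in a $\chi_d^t$-coloring $f$, then every vertex still dominating $V_i$ also dominates another class, and one tries to merge $V_i$ into a properly colorable adjacent class to produce a TDC of $C_n$ with $\chi_d^t(C_n) - 1$ colors, contradicting minimality. The extra ``slack'' present only when $n \equiv 4 \pmod 6$---arising from the four distinct singleton colors forced at the tail of the Case~4 construction---is precisely what prevents this merging argument from succeeding in that one residue class.
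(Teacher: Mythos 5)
Your sufficiency direction is essentially the paper's own argument: the Case-4 tail coloring with four new singletons on $v_{n-3},\dots,v_n$, together with the observation that both neighbours of $\{v_{n-3}\}$ already totally dominate another singleton class ($v_{n-4}\succ\{v_{n-5}\}$ and $v_{n-2}\succ\{v_{n-1}\}$), so that $pn(V_\pi;f)=\emptyset$. The genuine gap is in the necessity direction, and you have named it without closing it. Membership in Class 2 is a universally quantified statement over \emph{all} $\chi_d^t$-colorings, so checking that the ``canonical tail treatments'' of Cases 0, 1, 2, 3 and 5 leave every class a private neighbour proves nothing about a hypothetical non-canonical optimal coloring. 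The merging heuristic you offer in its place does not work as stated: if $pn(V_i;f)=\emptyset$ you cannot simply absorb $V_i$ into another class, because enlarging $V_j$ to $V_i\cup V_j$ can destroy the total domination of exactly those vertices that previously dominated $V_j$, and indeed the existence of Class 1 graphs shows that no such merging argument can be valid in general; you give no concrete reason why it would succeed precisely when $n\not\equiv 4\pmod 6$. What the paper actually does (for $n\equiv 2,5\pmod 6$) is argue that an \emph{arbitrary} optimal coloring is forced into the block structure of ways 1 and 2 plus a rigid two-vertex tail, and then read off a private neighbour for every class from that forced structure, leaving the other residues ``to the reader''; you would need to carry out such a forcing argument for each residue. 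A minor further slip: your claim that $|V_j|\le 2$ for every class is justified only for classes that some vertex totally dominates, which the definition of a TDC does not require of every class.

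A separate problem sits in your small cases. You propose to verify $n\in\{3,4,5\}$ by enumeration, presumably confirming $C_4,C_5\notin$ Class 1 as the statement demands. But $C_5$ \emph{is} in Class 1: the paper itself exhibits the $\chi_d^t$-coloring $f=(\{v_1,v_3\},\{v_2\},\{v_4\},\{v_5\})$ with $pn(\{v_2\};f)=\emptyset$, which is easy to check since $v_1\succ\{v_5\}$ and $v_3\succ\{v_4\}$. So the proposition as printed is inconsistent with its own proof (which also, incorrectly, places $C_4=K_{2,2}$ in Class 1, contradicting the paper's earlier remark that complete bipartite graphs lie in Class 2); the version consistent with Proposition \ref{chi_dt M(C_n)} is that $C_n\in$ Class 1 if and only if $n=5$, or $n\equiv 4\pmod 6$ and $n\neq 4$. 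Your enumeration of $C_5$ would either uncover this and contradict the statement you set out to prove, or be wrong; either way the proof as planned cannot close.
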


\begin{proof}
Let $V(C_n)=\{v_i\mbox{ }|\mbox{ }1\leq i \leq n\}$, and let $v_iv_j\in E(C_n)$ if and only if $|i-j|=1$ (to modulo $n$). First $C_5\in Class $ 1 since $f=(\{v_1,v_3\},\{v_2\},\{v_4\},\{v_5\})$ is a $\chi_d^t$-coloring of $C_5$ and $pn(\{v_2\};f)=\emptyset$. Also $C_4\in Class $ 1 since $C_4$ is the complete bipartite graph $K_{2,2}$. Now let $n=6\ell+4$ for some $\ell \geq 1$. Consider the coloring function $f$ on $V(C_n)$ such that
\begin{equation*}
f(v)=\left\{
\begin{array}{ll}
1+4k & \mbox{if }v=1+6k \mbox{ or } v=3+6k, \\
2+4k & \mbox{if }v=2+6k \mbox{ or } v=4+6k, \\
3+4k & \mbox{if }v=5+6k, \\
4+4k & \mbox{if }v=6+6k, \\
4\ell+m & \mbox{if }v=6\ell +m \mbox{ for } 0< m \leq 4, \\
\end{array}
\right.
\end{equation*}
where $0 \leq k \leq \ell -1$. $f=(V_1,...,V_t)$ is a $\chi_d^t(C_n)$-coloring, where $t=4(\ell+1)$. Then $pn(V_{n-3};f)=\emptyset$ and so $C_n\in Class $ 1. Because $V_{n-3}=\{v_{n-3}\}$, $V_{n-2}=\{v_{n-2}\}$, $V_{n-4}=\{v_{n-4}\}$ and $N(v_{n-3})=\{v_{n-4},v_{n-2}\}$, in which $v_{n-4}\succ V_{n-5}$ and $v_{n-2}\succ V_{n-1}$. In the remained cases we prove $C_n\in Class $ 2. Let $n \equiv 2~\mbox{or}~5\pmod{6}$ and $n\neq 5$. Let also $f=(V_1,...,V_k)$ be an arbitrary $\chi_d^t(C_n)$-coloring. As we saw in the proof of Proposition \ref{chi_dt C_n}, we know that every six consecutive vertices must be colored by one of the ways 1 or 2 or by a combining of them. Also we saw that $pn(V_i;f)\neq \emptyset$, where $1\leq i \leq k-2$. Since $f(v_{n-1})=k-1$, $f(v_{n})=k$, $V_{k-1}=\{v_{n-1}\}$, $V_{k}=\{v_{n}\}$, $pn(V_{k-1};f)=V_{k}$ and $pn(V_{k};f)=V_{k-1}$, we obtain $pn(V_i;f)\neq \emptyset$ for all $i$. Hence $C_n\in Class $ 2. The proof of other cases is similar and we left it to the reader. Therefore we have proved $C_n\in Class $ 1 if and only if $n\neq 4$ and $n\equiv 4\pmod{6}$.
\end{proof}

%---------------------------------------------------------------------------------------------------------------------

\begin{prop}
\label{Pn in Class 2} Let $P_n$ be a path of order $n\geq 2$. Then $P_n\in Class $ 2 if and only if $n=2$ or $n\equiv 0\pmod{3}$.
\end{prop}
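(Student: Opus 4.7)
The plan is to prove the biconditional in two directions. The base case $n = 2$ is immediate: the only $\chi_d^t$-coloring of $P_2$ is $f = (\{v_1\},\{v_2\})$, and $pn(\{v_1\};f) = \{v_2\}$, $pn(\{v_2\};f) = \{v_1\}$ are both nonempty, so $P_2 \in$ Class~2.

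For $n = 3k$ with $k \ge 1$, I would fix an arbitrary $\chi_d^t$-coloring $f = (V_1,\ldots,V_{2k})$. Since $\deg(v_1) = \deg(v_n) = 1$, the classes $\{v_2\}$ and $\{v_{n-1}\}$ are forced singletons. Two counts drive the argument: the \emph{size count} $\sum_i (|V_i|-1) = 3k - 2k = k$, and the \emph{domination count}
\[
n \;\le\; \sum_{i=1}^{2k} |CN_{P_n}(V_i)|,
\]
which records that each vertex dominates at least one class. In $P_n$ one has $|CN(\{v_j\})| = \deg(v_j) \le 2$, $|CN(\{v_a,v_b\})| = 1$ exactly when $|a-b| = 2$ (and $0$ otherwise), and $|CN(V_i)| = 0$ whenever $|V_i| \ge 3$ (three vertices of $P_n$ cannot share a common neighbor since $\Delta(P_n)\le 2$). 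Matching the domination count against $n = 3k$ forces every class to be either an interior singleton or a distance-$2$ pair, and makes the inequality tight. Tightness means each vertex dominates \emph{exactly} one class, whence $pn(V_i;f) = CN_{P_n}(V_i) \ne \emptyset$ for every~$i$, placing $P_n$ in Class~2.

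For the other direction I would show the contrapositive: if $n \ge 4$ and $n \not\equiv 0 \pmod 3$, exhibit a $\chi_d^t$-coloring of $P_n$ with a pair class having empty private neighborhood. Writing $n = 3k + r$ with $r \in \{1,2\}$, apply the standard block pattern $\{v_{3i-2},v_{3i}\},\{v_{3i-1}\}$ for $i = 1,\ldots,k-1$ on the first $3(k-1)$ vertices (empty when $k = 1$), and on the remaining $r+3$ vertices use the pair $\{v_{3k-2},v_n\}$ together with singleton classes for the intermediate vertices. The total is $2(k-1)+3 = 2k+1$ colors when $r = 1$ and $2(k-1)+4 = 2k+2$ when $r = 2$, matching Proposition~\ref{chi_dt P_n}. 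Routine checks confirm properness and the TDC condition (each vertex dominates a nearby forced singleton), and the pair $\{v_{3k-2},v_n\}$ has its two vertices at distance $r + 2 \ge 3$ in $P_n$, so they share no common neighbor and the corresponding private neighborhood is empty. Hence $P_n \in$ Class~1.

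The main obstacle I anticipate is the step in the Class~2 direction for $n = 3k$ that rules out $\chi_d^t$-colorings containing a class of size $\ge 3$. The size count $\sum(|V_i|-1) = k$ alone permits, for instance, configurations with one triple together with $k+1$ singletons and $k-2$ pairs, and such a triple would automatically have empty private neighborhood. Excluding them must be done using the TDC condition directly: for a hypothetical independent triple $\{v_a,v_b,v_c\}$ with $a<b<c$, the dominators of $v_{a+1},v_{b-1},v_{b+1},v_{c-1}$ are forced to be singleton classes of the opposite neighbor in each pair, and careful bookkeeping of these forced singletons together with $\{v_2\},\{v_{n-1}\}$ must be shown to push the total class count strictly above $2k$, contradicting optimality. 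This case analysis, branching on the position of the triple relative to the endpoints and to the obligatory singletons, is the delicate part of the argument.
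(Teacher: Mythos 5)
Your reverse direction (for $n\ge 4$ with $n\not\equiv 0\pmod 3$, exhibiting an optimal coloring whose pair class $\{v_{3k-2},v_n\}$ lies at distance $\ge 3$ and hence has empty private neighborhood) is sound and is essentially the paper's own construction. The problem is the forward direction for $n=3k$, and it sits exactly where you flagged it. Your two counts do not force every class to be an interior singleton or a distance-$2$ pair: a triple with empty common neighborhood can be compensated by extra interior singletons while keeping both the size count and the domination count tight. Worse, the configuration you hoped to exclude by ``careful bookkeeping'' actually occurs, so no case analysis can close the gap. Take $P_9$ with the coloring $V_1=\{v_1,v_5,v_9\}$, $V_2=\{v_2\}$, $V_3=\{v_3\}$, $V_4=\{v_4,v_6\}$, $V_5=\{v_7\}$, $V_6=\{v_8\}$. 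It is proper, uses $6=\chi_d^t(P_9)$ colors, and is a total dominator coloring ($v_1,v_3\succ V_2$; $v_2,v_4\succ V_3$; $v_5\succ V_4$; $v_6,v_8\succ V_5$; $v_7,v_9\succ V_6$), yet no vertex of $P_9$ is adjacent to all of $v_1,v_5,v_9$, so $pn(V_1;f)=\emptyset$ and $P_9$ belongs to Class $1$. Prepending the standard blocks $\{v_1,v_3\},\{v_2\},\dots$ to this $9$-vertex tail produces the same phenomenon for every $n=3k$ with $k\ge 3$. So the ``only if'' half of the statement you are trying to prove is false for $n\equiv 0\pmod 3$, $n\ge 9$; it survives only for $n=3$ and $n=6$, where one checks directly that no independent triple can be a color class of an optimal TDC.

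For comparison, the paper's proof of this direction consists of the bare assertion that $P_{3\ell}$ admits a unique $\chi_d^t$-coloring, namely the block pattern; the coloring above refutes that assertion as well, and with it the $n\equiv 0\pmod 3$ line of Proposition \ref{chi_dt M(P_n)}. Your instinct that excluding large classes is the delicate step was exactly right, but the honest conclusion is that the step cannot be carried out and the proposition itself needs to be corrected rather than proved.
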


\begin{proof}
Let $V(P_n)=\{v_i\mbox{ }|\mbox{ }1\leq i \leq n\}$ and for $1\leq i<j\leq n$, $v_iv_j\in E(C_n)$ if and only if $j=i+1$.
Obviously $P_2\in Class $ 2. Now let $n=3\ell+2$ for some $\ell \geq 1$. Consider  a coloring function $f$ defined on $V(P_n)$ such that
\begin{equation*}
f(v_i)=\left\{
\begin{array}{ll}
1+2k & \mbox{if }i=1+3k \mbox{ or } i=3+3k, \\
2+2k & \mbox{if }i=2+3k, \\
\end{array}
\right.
\end{equation*}
when $0 \leq k \leq \ell -2$, and $f(v_{n-4})=f(v_n)=2\ell-1$, $f(v_{n-3})=2\ell$, $f(v_{n-2})=2\ell+1$, $f(v_{n-1})=2\ell+2$. Since $f=(V_1,...,V_{2\ell+2})$ is a $\chi_d^t$-coloring of $P_n$ and $pn(V_{2\ell-1};f)=\emptyset$, we conclude $P_n\in Class $ 1. Now let $n=3\ell+1$ for some $\ell \geq 1$. Consider  a coloring function $f$ defined on $V(P_n)$ such that
\begin{equation*}
f(v_i)=\left\{
\begin{array}{ll}
1+2k & \mbox{if }i=1+3k \mbox{ or } i=3+3k, \\
2+2k & \mbox{if }i=2+3k, \\
\end{array}
\right.
\end{equation*}
when $0 \leq k \leq \ell -2$, and $f(v_{n-3})=f(v_n)=2\ell-1$, $f(v_{n-2})=2\ell$, $f(v_{n-1})=2\ell+1$. Since $f=(V_1,...,V_{2\ell+1})$ is a $\chi_d^t$-coloring of $P_n$ and $pn(V_{2\ell-1};f)=\emptyset$, we conclude $P_n\in Class $ 1. Finally, let $n=3\ell$ for some $\ell \geq 1$. In this case, $P_n$ has the only $\chi_d^t$-coloring $f$ with
\begin{equation*}
f(v_i)=\left\{
\begin{array}{ll}
1+2k & \mbox{if }i=1+3k \mbox{ or } i=3+3k, \\
2+2k & \mbox{if }i=2+3k, \\
\end{array}
\right.
\end{equation*}
when $0 \leq k \leq \ell-1$. Since $pn(V_{i};f)\neq \emptyset$, for all $i$, we conclude $P_n\in Class $ 2.
\end{proof}
%---------------------------------------------------------------------------------------------------------------------

\begin{prop}
\label{Cn Complement in Class 1} Let $\overline{C_n}$ be the complement of the cycle $C_n$ of order $n\geq 4$. Then $\overline{C_n}\in Class $ 2 if and only if $n=4,5,6$.
\end{prop}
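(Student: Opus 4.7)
The plan is to split on $n$: construct an explicit $\chi_d^t$-coloring of $\overline{C_n}$ with an empty private neighborhood when $n\geq 7$, and handle $n\in\{4,5,6\}$ by enumerating the (few) $\chi_d^t$-colorings. The backbone of both directions is the following \emph{dominance criterion} in $\overline{C_n}$: because the non-neighbors of $v_i$ are exactly $v_{i-1}$ and $v_{i+1}$ (indices mod $n$), a vertex $v_i$ dominates a color class $V_j$ of a proper coloring iff $V_j\cap\{v_{i-1},v_i,v_{i+1}\}=\emptyset$. I will also lean on Proposition~\ref{chi_d^t Cn Complement} for the value of $\chi_d^t(\overline{C_n})$.

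For $n\geq 7$, consider the pairing coloring $f$ whose classes are $\{v_{2i-1},v_{2i}\}$ for $1\leq i\leq\lfloor n/2\rfloor$, together with the singleton $\{v_n\}$ when $n$ is odd. Since consecutive cycle-vertices are non-adjacent in $\overline{C_n}$, each class is independent, and the number of classes is $\lceil n/2\rceil=\chi_d^t(\overline{C_n})$. The dominance criterion shows each $v_k$ fails to dominate only the classes meeting $\{v_{k-1},v_k,v_{k+1}\}$, and this triple lies in at most two classes of $f$, so every vertex dominates at least one class and $f$ is a TDC, hence a $\chi_d^t$-coloring. Now focus on $V_1=\{v_1,v_2\}$: its dominators are precisely $v_4,v_5,\ldots,v_{n-1}$, and for each such $v_k$ (using $n\geq 7$) the triple $\{v_{k-1},v_k,v_{k+1}\}$ is disjoint from $V_1$ while meeting exactly two classes of $f$, both different from $V_1$. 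Hence each dominator of $V_1$ also dominates some other class, giving $pn(V_1;f)=\emptyset$, so $\overline{C_n}\in$ Class 1.

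For the small cases, I would enumerate the possible $\chi_d^t$-colorings of $\overline{C_n}$. In $\overline{C_4}=2K_2$ every vertex has degree~$1$, which forces every color class of a TDC to be a singleton; the resulting unique $4$-coloring has each $pn(V_i)$ equal to the sole neighbor of $v_i$. In $\overline{C_6}$ the independence number equals $2$, so the three classes of a proper $3$-coloring must form a perfect matching of $C_6$; since $C_6$ admits only two perfect matchings, the resulting two colorings can be examined directly, and each class inherits at least one private neighbor (here the dominance criterion gives $\lceil 6/2\rceil-2=1$, so each dominator of a class dominates that class only). In $\overline{C_5}\cong C_5$, the value $\chi_d^t=4$ forces exactly one pair and three singletons, so running through the configurations up to the rotational symmetry of the $5$-cycle completes the case.

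The main obstacle is the $n\geq 7$ construction: it has to simultaneously be proper, attain the minimum number of colors, be a TDC, and kill the private neighborhood of at least one class. The dominance criterion streamlines all four checks; the crucial inequality is that $\lceil n/2\rceil-2\geq 2$ for $n\geq 7$, ensuring that every dominator of $V_1$ also dominates at least one additional class, while the reverse inequality $\lceil n/2\rceil-2\leq 1$ for $n=6$ explains why no analogous coloring works there. For the small cases the severely restricted independence number (or small vertex count) makes enumeration tractable.
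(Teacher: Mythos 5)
Your $n\geq 7$ argument is correct and is essentially the paper's own: the paper uses the same pairing coloring $V_i=\{v_{2i-1},v_{2i}\}$ (plus the singleton $\{v_n\}$ for odd $n$) and kills $pn(V_2;f)$ where you kill $pn(V_1;f)$; your explicit dominance criterion ($v_k\succ V_j$ iff $V_j\cap\{v_{k-1},v_k,v_{k+1}\}=\emptyset$) makes the verification cleaner than the paper's. One small slip there: for odd $n$ the triple $\{v_{n-1},v_n,v_1\}$ meets \emph{three} classes (two pairs and the singleton), not two, so the correct count is that every vertex dominates at least $\lceil n/2\rceil-3\geq 1$ classes; the argument survives. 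The cases $n=4$ and $n=6$ are fine and in substance match the paper.

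The genuine gap is $n=5$, which you defer to an unexecuted enumeration; if you actually run it, it proves the opposite of what is claimed. Since $\chi_d^t(\overline{C_5})=\chi_d^t(C_5)=4$ (Propositions \ref{chi_dt C_n} and \ref{chi_d^t Cn Complement}) and the independence number of $C_5$ is $2$, every $\chi_d^t$-coloring of $C_5$ has one independent pair and three singletons, and up to the symmetry of the pentagon the only such proper coloring is $f=(\{v_1,v_3\},\{v_2\},\{v_4\},\{v_5\})$. This $f$ is a TDC, and $pn(\{v_2\};f)=\emptyset$: the only vertices dominating $\{v_2\}$ are its neighbors $v_1$ and $v_3$, and $v_1\succ\{v_5\}$ while $v_3\succ\{v_4\}$. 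Hence $\overline{C_5}\cong C_5$ is a Class 1 graph, so your enumeration cannot "complete the case" in the asserted direction. This is not only your problem: the paper's own proof of the $n=5$ case merely cites Proposition \ref{Cn in Class 1}, whose proof exhibits exactly this coloring and explicitly concludes $C_5\in$ Class 1, so the paper contradicts itself at this point. As written, your proposal (and the proposition) cannot be completed for $n=5$; the case needs to be resolved by either correcting the statement or showing that the coloring above is somehow inadmissible, and neither you nor the paper does so.
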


\begin{proof}
Let $V(\overline{C_n})=\{v_i\mbox{ }|\mbox{ }1\leq i \leq n\}$ and for $1\leq i<j\leq n$, $v_iv_j\in E(\overline{C_n})$ if and only if $j\neq i+1$ (to modulo $n$). If $n=4$, then $C_n$ is isomorphic to the two copies of $K_2$. $K_2\in Class $ 2 implies $\overline{C_4}\in Class $ 2. If $n=5$, then $\overline{C_5}$ is a cycle of order 5, and Proposition \ref{Cn in Class 1} implies $\overline{C_5}\in Class$ 2. In $C_6$, every $\chi_d^t$-coloring is in the form $f=(V_1,V_2,V_3)$, where $V_1=\{v_i,v_{i+1}\}$, $V_2=\{v_{i+2},v_{i+3}\}$, $V_3=\{v_{i+4},v_{i+5}\}$, for some integer $1\leq i \leq 6$. Since $pn(V_i;f)\neq \emptyset$, for all $i$, we obtain $\overline{C_6}\in Class 2$. Now let $n\geq 7$. Assume that $V_i=\{v_{2i},v_{2i-1}\}$, for $1\leq i \leq \lfloor\frac{n}{2}\rfloor$. Since for even $n$, $f=(V_1,V_2,...,V_{\lfloor\frac{n}{2}\rfloor})$ is a $\chi_d^t(\overline{C_n})$-coloring, and for odd $n$, $f=(V_1,V_2,...,V_{\lfloor\frac{n}{2}\rfloor},\{v_n\})$ is a $\chi_d^t(\overline{C_n})$-coloring, and $pn(V_2;f)=\emptyset$, we conclude that $\overline{C_n}\in Class $ 1 if $n\geq 7$.
\end{proof}
%---------------------------------------------------------------------------------------------------------------------

\begin{prop}
\label{Pn Complement in Class 1} Let $\overline{P_n}$ be the complement of the path $P_n$ of order $n\geq 4$. Then $\overline{P_n}\in Class $ 1.
\end{prop}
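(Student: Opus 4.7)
The plan is to exhibit for each $n\geq 4$ an explicit $\chi_d^t$-coloring $f$ of $\overline{P_n}$ in which some color class has empty private neighborhood, forcing $\overline{P_n}$ into Class 1 by definition. Guided by the pairing coloring used in Proposition~\ref{Cn Complement in Class 1}, for $n\geq 5$ I would set $V_i=\{v_{2i-1},v_{2i}\}$ for $1\leq i\leq \lfloor n/2\rfloor$, and add a singleton class $V_{\lceil n/2\rceil}=\{v_n\}$ when $n$ is odd. Each class pairs two $P_n$-adjacent vertices, which are non-adjacent in $\overline{P_n}$, so $f$ is automatically proper.

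I would then verify that $f$ is a TDC using the rule that $v_k v_j\in E(\overline{P_n})$ if and only if $|k-j|\geq 2$. A brief case split shows $v_1\succ V_2$, $v_2\succ V_3$, $v_3\succ V_3$, and $v_k\succ V_1$ for all $k\geq 4$. Since $f$ uses $\lceil n/2\rceil$ colors, Proposition~\ref{chi_d^t Pn Complement} confirms it is a $\chi_d^t$-coloring of $\overline{P_n}$.

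The key step is to show $pn(V_2;f)=\emptyset$. A vertex $v_k$ can totally dominate $V_2=\{v_3,v_4\}$ only if $|k-3|\geq 2$ and $|k-4|\geq 2$, i.e., only if $k\leq 1$ or $k\geq 6$. But $v_1$ is adjacent in $\overline{P_n}$ to every $v_j$ with $j\geq 3$, so $v_1\succ V_3$ as well; and any $v_k$ with $k\geq 6$ satisfies $k-1,k-2\geq 2$, so $v_k\succ V_1$ as well. Either way the candidate is not private to $V_2$, which finishes the case $n\geq 5$.

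The case $n=4$ must be treated separately since the pairing produces only two classes. Here I would use the explicit TDC $f=(\{v_1\},\{v_4\},\{v_2,v_3\})$ on $\overline{P_4}$, which is proper, uses $\chi_d^t(\overline{P_4})=3$ colors, and has $pn(\{v_2,v_3\};f)=\emptyset$ because no vertex of $\overline{P_4}$ is adjacent to both $v_2$ and $v_3$. The main difficulty I anticipate is simply the bookkeeping near the endpoints of the path for the smallest values of $n$; once the explicit colorings are in place, everything reduces to short distance inequalities.
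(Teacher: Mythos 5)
Your proposal is correct and follows essentially the same route as the paper: the paired coloring $V_i=\{v_{2i-1},v_{2i}\}$ (with a singleton $\{v_n\}$ for odd $n$) and the observation that $pn(V_2;f)=\emptyset$ is exactly the coloring the paper uses for $n\geq 5$ (imported from Proposition~\ref{Cn Complement in Class 1} for $n\geq 7$ and written out explicitly for $n=5,6$). The only cosmetic difference is at $n=4$, where the paper invokes $\overline{P_4}\cong P_4$ together with Proposition~\ref{Pn in Class 2} rather than giving an explicit coloring; your verification of the TDC property and of $pn(V_2;f)=\emptyset$ is in fact more detailed than the paper's, which merely asserts these facts.
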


\begin{proof}
Let $V(\overline{P_n})=\{v_i\mbox{ }|\mbox{ }1\leq i \leq n\}$ and for $1\leq i<j<n$, $v_iv_j\in E(\overline{P_n})$ if and only if $j\neq i+1$ (to modulo $n$). If $n=4$, then $\overline{P_4}$ is a path of order 4, and Proposition \ref{Pn in Class 2} implies $\overline{P_4}\in Class$ 1. For $n=5,6$ we consider the $\chi_d^t$-colorings $f=(\{v_1,v_2\},\{v_3,v_4\},\{v_5\})$ and $f=(\{v_1,v_2\},\{v_3,v_4\},\{v_5,v_6\})$, respectively. Since in each case, $pn(\{v_3,v_4\};f)=\emptyset$, we conclude $\overline{P_n}\in Class $ 1. Now let $n\geq 7$. Since the $\chi_d^t(\overline{C_n})$-colorings given in Proposition \ref{Cn Complement in Class 1} are also $\chi_d^t(\overline{P_n})$-colorings and $pn(V_2;f)=\emptyset$, we conclude that $\overline{P_n}\in Class $ 1 if $n\geq 7$.
\end{proof}

%-------------------------------------------------------------------------------------------------------------------------------------
By knowing $G\in Class$ 2 if and only if $G\not\in Class$ 1, the previous propositions and Theorems
\ref{chi M(G)=chi G+1} and \ref{chi M(G)=chi G+2} imply the following results.
%---------------------------------------------------------------------------------------------------------------------------------------
\begin{prop}
\label{chi_dt M(C_n)} Let $C_n$ be a cycle of order $n\geq 3$. Then
\begin{equation*}
\chi_d^t(M(C_n))=\left\{
\begin{array}{ll}
n                                & \mbox{if }n=4,5 \\
4\lfloor \frac{n}{6}\rfloor +r+2 & \mbox{if }~~n\equiv r\pmod{6} \mbox{, where }r=0,1,2,\\
4\lfloor \frac{n}{6}\rfloor +r+1 & \mbox{if }~~n\equiv r\pmod{6} \mbox{, where }r=3,4,5 \mbox{ and } n\neq 4,5.
\end{array}
\right.
\end{equation*}
\end{prop}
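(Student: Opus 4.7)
The plan is to read $\chi_d^t(M(C_n))$ directly off three earlier results: the value of $\chi_d^t(C_n)$ from Proposition \ref{chi_dt C_n}, the Class~1/Class~2 classification of cycles from Proposition \ref{Cn in Class 1}, and the translation Theorems \ref{chi M(G)=chi G+1} and \ref{chi M(G)=chi G+2}, which say $\chi_d^t(M(G))-\chi_d^t(G)$ equals $1$ or $2$ according to whether $G$ is in Class~1 or Class~2. Once these are assembled, the proposition becomes a case analysis on the residue $r = n \bmod 6$.

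I would first dispose of the two small exceptional cases $n=4,5$ by direct inspection. For $n = 4$, $C_4 \cong K_{2,2}$ is complete bipartite with a unique $\chi_d^t$-coloring whose two color classes are each the private neighborhood of the other, so $C_4 \in $ Class~2; Theorem \ref{chi M(G)=chi G+2} combined with $\chi_d^t(C_4) = 2$ then gives $\chi_d^t(M(C_4)) = 4 = n$. For $n=5$, the coloring $f = (\{v_1,v_3\},\{v_2\},\{v_4\},\{v_5\})$ exhibited in the proof of Proposition \ref{Cn in Class 1} satisfies $pn(\{v_2\};f) = \emptyset$, so $C_5 \in $ Class~1, and Theorem \ref{chi M(G)=chi G+1} with $\chi_d^t(C_5) = 4$ yields $\chi_d^t(M(C_5)) = 5 = n$.

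For $n \geq 6$ I would split on $r = n \bmod 6$. By Proposition \ref{Cn in Class 1}, $C_n$ is in Class~1 precisely when $r = 4$, and in Class~2 otherwise. In the Class~1 case, Theorem \ref{chi M(G)=chi G+1} together with $\chi_d^t(C_n) = 4\lfloor n/6\rfloor + 4$ gives
\[
\chi_d^t(M(C_n)) = 4\lfloor n/6\rfloor + 5 = 4\lfloor n/6\rfloor + r + 1,
\]
matching the third line of the formula. In the remaining Class~2 cases, Theorem \ref{chi M(G)=chi G+2} contributes a $+2$ shift; feeding in the piecewise expression for $\chi_d^t(C_n)$ from Proposition \ref{chi_dt C_n} yields $4\lfloor n/6\rfloor + r + 2$ for $r \in \{0,1,2\}$ and $(4\lfloor n/6\rfloor + r - 1) + 2 = 4\lfloor n/6\rfloor + r + 1$ for $r \in \{3,5\}$, completing the case check. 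The entire argument is essentially bookkeeping, and the only step that demands real care is the treatment of $n=4,5$, whose Class membership must be verified directly because they fall outside the generic modular pattern used for $n \geq 6$.
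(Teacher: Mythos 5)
Your overall route is exactly the one the paper intends: the paper gives no explicit proof of this proposition, merely asserting that it follows from Propositions \ref{chi_dt C_n} and \ref{Cn in Class 1} together with Theorems \ref{chi M(G)=chi G+1} and \ref{chi M(G)=chi G+2}, and your bookkeeping for $n\geq 6$ carries that out correctly. Your treatment of $n=4,5$ is in fact more careful than the paper's own statements permit: as literally stated, Proposition \ref{Cn in Class 1} excludes both $C_4$ and $C_5$ from Class 1, which would force $\chi_d^t(M(C_5))=\chi_d^t(C_5)+2=6$, contradicting the value $5$ claimed here. Your direct verification that $C_5\in$ Class 1 via the coloring $(\{v_1,v_3\},\{v_2\},\{v_4\},\{v_5\})$, and that $C_4\cong K_{2,2}$ lies in Class 2, is the right way to resolve that inconsistency, and it agrees with what the proof of Proposition \ref{Cn in Class 1} actually establishes (rather than with its stated conclusion).

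The one genuine gap is $n=3$: your case split covers $n=4,5$ and $n\geq 6$ only, yet the third line of the formula (with $r=3$) is asserted for $n=3$ as well. This case cannot be disposed of by the same bookkeeping, because $C_3=K_3$ and Proposition \ref{chi_dt C_n} as stated returns $\chi_d^t(C_3)=2$, which is impossible since $\chi_d^t(K_3)\geq\chi(K_3)=3$; likewise Proposition \ref{Cn in Class 1} would place $C_3$ in Class 2, whereas $K_3$, being a complete graph of order at least $3$, belongs to Class 1. The correct computation is $\chi_d^t(M(C_3))=\chi_d^t(K_3)+1=4$, which does coincide with $4\lfloor 3/6\rfloor+3+1$, but only because the two errors in the cited inputs cancel. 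You should treat $n=3$ explicitly rather than let it fall through the generic case.
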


%---------------------------------------------------------------------------------------------------------------------------------------
\begin{prop}
\label{chi_dt M(C_n Complement)} Let $C_n$ be a cycle of order $n\geq 3$. Then
\begin{equation*}
\chi_d^t(M(\overline{C_n}))=\left\{
\begin{array}{ll}
6                                & \mbox{if }n=4,5, \\
5                                & \mbox{if }n=6,\\
\lceil\frac{n}{2}\rceil +1 & \mbox{if }n\geq 7.
\end{array}
\right.
\end{equation*}
\end{prop}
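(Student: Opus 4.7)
The plan is to derive the three cases as immediate consequences of the classification of $\overline{C_n}$ into Classes 1 and 2 established in Proposition \ref{Cn Complement in Class 1}, combined with the value of $\chi_d^t(\overline{C_n})$ computed in Proposition \ref{chi_d^t Cn Complement}. Before applying anything, I would verify the hypothesis $\delta(\overline{C_n}) \geq 1$: for every $n \geq 4$ one has $\delta(\overline{C_n}) = n - 3 \geq 1$, so Theorems \ref{chi M(G)=chi G+1} and \ref{chi M(G)=chi G+2} are available throughout.

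Next I would handle the small cases $n \in \{4, 5, 6\}$. By Proposition \ref{Cn Complement in Class 1}, these are precisely the values for which $\overline{C_n}$ lies in Class 2, so Theorem \ref{chi M(G)=chi G+2} yields $\chi_d^t(M(\overline{C_n})) = \chi_d^t(\overline{C_n}) + 2$. Substituting $\chi_d^t(\overline{C_4}) = \chi_d^t(\overline{C_5}) = 4$ and $\chi_d^t(\overline{C_6}) = \lceil 6/2 \rceil = 3$ from Proposition \ref{chi_d^t Cn Complement}, one obtains $6$ for $n = 4, 5$ and $5$ for $n = 6$, matching the first two lines of the displayed formula.

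For $n \geq 7$, Proposition \ref{Cn Complement in Class 1} places $\overline{C_n}$ in Class 1, so Theorem \ref{chi M(G)=chi G+1} gives $\chi_d^t(M(\overline{C_n})) = \chi_d^t(\overline{C_n}) + 1$, and Proposition \ref{chi_d^t Cn Complement} then produces the final value $\lceil n/2 \rceil + 1$.

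Since essentially all of the substantive work has already been done in the earlier propositions and theorems, this result is a routine bookkeeping corollary with no serious obstacle. The only point that deserves care is the boundary case $n = 6$, where the Class 2 membership forces the value $\lceil 6/2 \rceil + 2 = 5$ rather than the value $\lceil 6/2 \rceil + 1 = 4$ predicted by the general $n \geq 7$ formula; this is precisely why $n = 6$ must be singled out as a separate case in the statement.
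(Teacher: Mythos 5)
Your proposal is correct and is exactly the argument the paper intends: the paper states this proposition as an immediate consequence of Propositions \ref{chi_d^t Cn Complement} and \ref{Cn Complement in Class 1} together with Theorems \ref{chi M(G)=chi G+1} and \ref{chi M(G)=chi G+2}, and gives no further proof. Your case-by-case arithmetic (including the check that $\delta(\overline{C_n})\geq 1$ and the observation that $n=6$ falls in Class 2 and hence gets $+2$) matches what the paper leaves implicit.
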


%---------------------------------------------------------------------------------------------------------------------------------------
\begin{prop}
\label{chi_dt M(P_n)} Let $P_n$ be a path of order $n\geq 2$. Then
\begin{equation*}
\chi_d^t(M(P_n))=\left\{
\begin{array}{ll}
4                                & \mbox{if }n=2, \\
2\lceil \frac{n}{3}\rceil   & \mbox{if } ~~n\equiv 1\pmod{3},\\
2\lceil \frac{n}{3}\rceil +2 & \mbox{if }~~n\equiv 0\pmod{3},\\
2\lceil \frac{n}{3}\rceil +1 & \mbox{if }~~n\equiv 2\pmod{3}, \mbox{ and }n\neq 2.
\end{array}
\right.
\end{equation*}
\end{prop}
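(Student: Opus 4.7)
The plan is to read this proposition as a bookkeeping corollary: all the real work has already been done in Proposition \ref{chi_dt P_n} (which supplies the value of $\chi_d^t(P_n)$) and Proposition \ref{Pn in Class 2} (which tells us which paths lie in Class 1 and which in Class 2), while Theorems \ref{chi M(G)=chi G+1} and \ref{chi M(G)=chi G+2} let us translate class membership into the exact value of $\chi_d^t(M(P_n))$. So my strategy is purely a case split on $n \bmod 3$, and within each case I simply add $1$ or $2$ to the value from Proposition \ref{chi_dt P_n} according to whether $P_n$ is in Class 1 or Class 2.

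First I would dispose of $n=2$: since $P_2 = K_2$ satisfies $\chi_d^t(P_2)=2$ (from Proposition \ref{chi_dt P_n} with $n\equiv 2\pmod 3$) and $P_2 \in \mathrm{Class}\ 2$ (Proposition \ref{Pn in Class 2}), Theorem \ref{chi M(G)=chi G+2} yields $\chi_d^t(M(P_2))=2+2=4$, matching the first line. Next, for $n\equiv 1\pmod 3$, Proposition \ref{Pn in Class 2} gives $P_n\in \mathrm{Class}\ 1$, so Theorem \ref{chi M(G)=chi G+1} combined with $\chi_d^t(P_n)=2\lceil n/3\rceil-1$ produces $\chi_d^t(M(P_n))=2\lceil n/3\rceil$.

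For $n\equiv 0\pmod 3$, Proposition \ref{Pn in Class 2} places $P_n$ in Class 2, and $\chi_d^t(P_n)=2\lceil n/3\rceil$, so Theorem \ref{chi M(G)=chi G+2} gives $\chi_d^t(M(P_n))=2\lceil n/3\rceil+2$. Finally, for $n\equiv 2\pmod 3$ with $n\neq 2$, Proposition \ref{Pn in Class 2} gives $P_n\in \mathrm{Class}\ 1$ and $\chi_d^t(P_n)=2\lceil n/3\rceil$, whence Theorem \ref{chi M(G)=chi G+1} gives $\chi_d^t(M(P_n))=2\lceil n/3\rceil+1$.

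There is no real obstacle here; the only thing to be careful about is not to misread the congruence conventions between Propositions \ref{chi_dt P_n} and \ref{Pn in Class 2}, since the former distinguishes $n\equiv 1\pmod 3$ from the other two residues while the latter lumps $n\equiv 1,2\pmod 3$ together as Class 1 (with $n=2$ as the sole exception). Checking that the arithmetic $\chi_d^t(P_n)+1$ or $+2$ reproduces each of the four lines of the displayed formula is the entire content of the proof.
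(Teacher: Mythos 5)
Your proposal is correct and is exactly the argument the paper intends: the paper states this proposition without a written proof, noting only that it follows from Proposition \ref{chi_dt P_n}, Proposition \ref{Pn in Class 2}, and Theorems \ref{chi M(G)=chi G+1} and \ref{chi M(G)=chi G+2}, which is precisely the bookkeeping you carry out. Your four-way case split and the arithmetic in each case check out, so nothing is missing.
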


%---------------------------------------------------------------------------------------------------------------------------------------
\begin{prop}
\label{chi_dt M(P_n Complement)} Let $P_n$ be a path of order $n\geq 2$. Then
\begin{equation*}
\chi_d^t(M(\overline{P_n}))=\left\{
\begin{array}{ll}
4                                & \mbox{if }n=4, \\
\lceil\frac{n}{2}\rceil +1& \mbox{if }n\geq 5.
\end{array}
\right.
\end{equation*}
\end{prop}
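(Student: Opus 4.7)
The plan is to derive this proposition directly from the machinery already assembled, with no new structural work required. The key observation is that everything reduces to knowing (a) the value of $\chi_d^t(\overline{P_n})$ and (b) the Class membership of $\overline{P_n}$, both of which have been established.

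First I would verify the hypothesis $\delta(\overline{P_n}) \geq 1$ needed to apply Theorems \ref{chi_d^t G+1=<chi_d^t M(G)=<chi_d^t G+2}, \ref{chi M(G)=chi G+1}, and \ref{chi M(G)=chi G+2}. For any $n \geq 4$ and any vertex $v_i$ of $P_n$, $v_i$ has at most two neighbors in $P_n$, so in $\overline{P_n}$ its degree is at least $n - 3 \geq 1$. Hence $\overline{P_n}$ satisfies the standing hypothesis of the Class 1 / Class 2 characterization.

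Next, by Proposition \ref{Pn Complement in Class 1}, $\overline{P_n} \in \text{Class 1}$ for every $n \geq 4$. Therefore Theorem \ref{chi M(G)=chi G+1} yields
\[
\chi_d^t(M(\overline{P_n})) = \chi_d^t(\overline{P_n}) + 1.
\]
Now I would plug in Proposition \ref{chi_d^t Pn Complement}, which gives $\chi_d^t(\overline{P_4}) = 3$ and $\chi_d^t(\overline{P_n}) = \lceil n/2 \rceil$ for $n \geq 5$. This immediately produces $\chi_d^t(M(\overline{P_4})) = 4$ and $\chi_d^t(M(\overline{P_n})) = \lceil n/2 \rceil + 1$ for $n \geq 5$, matching the formula.

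There is essentially no obstacle here: the statement is a pure corollary of the earlier Class 1 identification of $\overline{P_n}$ and the value of $\chi_d^t(\overline{P_n})$. The only thing requiring any attention is the trivial $\delta \geq 1$ check, which holds for all $n \geq 4$.
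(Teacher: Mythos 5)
Your proposal is correct and follows exactly the route the paper intends: the paper states this proposition (without a written proof) as an immediate consequence of Proposition \ref{Pn Complement in Class 1}, Theorem \ref{chi M(G)=chi G+1}, and Proposition \ref{chi_d^t Pn Complement}, which is precisely the chain you assemble. The only addition you make is the explicit (and correct) check that $\delta(\overline{P_n})\geq 1$ for $n\geq 4$, which the paper leaves implicit.
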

%-----------------------------------------------------------------------------------------------------------------------------

We know that for any $k\geq 2$ there is the complete graph $K_k$ with $\chi_d^t(K_k)=k$. Is there a non-complete graph $G$ with $\chi_d^t(G)=k$? Since $\chi_d^t(G)=2$ if and only if $G$ is a complete bipartite graph (see \cite{Kaz}), the answer is negative for $k=2$. In the following we answer this question for $k\geq 3$. First a lemma.
%-----------------------------------------------------------------------------------------------------------------------------

\begin{lem}
\label{G in Class1 then M(G) in Class1} Let $G$ be a graph. If $G \in Class$ 1, then $M(G) \in Class$ 1.
\end{lem}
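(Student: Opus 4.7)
The plan is to exhibit a specific $\chi_d^t$-coloring of $M(G)$ that has a color class with empty private neighborhood. The natural candidate is the very coloring $g$ used in the proof of Theorem~\ref{chi M(G)=chi G+1}: starting from a $\chi_d^t$-coloring $f=(V_1,\dots,V_\ell)$ of $G$ with $pn_G(V_i;f)=\emptyset$ for some $i$, I would set $g=(V_1,\dots,V_{i-1},V_i\cup\{w\},V_{i+1},\dots,V_\ell,U)$. That theorem already establishes $g$ is a TDC with $\ell+1=\chi_d^t(G)+1=\chi_d^t(M(G))$ color classes, so $g$ is automatically a $\chi_d^t$-coloring of $M(G)$, and I only have to locate an empty private neighborhood within it.

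Next I would argue that the correct class to test is \emph{not} $U$ but rather the merged class $V_i\cup\{w\}$. Since $N_{M(G)}(w)=U$, the vertex $w$ satisfies $w\succ U$ and $w\not\succ V_k$ for any $k$, so $w\in pn_{M(G)}(U;g)$ and $U$ already has a witness in its private neighborhood; hence $U$ cannot be the class we want. By contrast, any vertex in $pn_{M(G)}(V_i\cup\{w\};g)$ must be adjacent to $w$ and therefore must lie in $U$.

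Thus I would take an arbitrary $u_j$ with $u_j\succ V_i\cup\{w\}$ and use the Mycielskian edge rule ($u_jv_k\in E(M(G))$ iff $v_jv_k\in E(G)$) to translate $u_j\succ V_i$ in $M(G)$ into $v_j\succ V_i$ in $G$. The hypothesis $pn_G(V_i;f)=\emptyset$ then supplies some index $m\neq i$ with $v_j\succ V_m$ in $G$, and the same edge rule gives $u_j\succ V_m$ in $M(G)$. So $u_j\notin pn_{M(G)}(V_i\cup\{w\};g)$, establishing emptiness and hence $M(G)\in$ Class~1.

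The main obstacle is conceptual rather than computational: recognizing that $w$ blocks $U$ from being the witness class, and identifying instead $V_i\cup\{w\}$ as the class whose candidate private neighbors correspond via the bijection $u_j\leftrightarrow v_j$ exactly to the (empty) set of $G$-private neighbors of $V_i$.
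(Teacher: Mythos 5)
Your proposal is correct and follows essentially the same route as the paper: both use the coloring $g=(V_1,\dots,V_{i-1},V_i\cup\{w\},V_{i+1},\dots,V_\ell,U)$ and observe that $pn_{M(G)}(V_i\cup\{w\};g)=\emptyset$. The paper merely asserts this emptiness, whereas you supply the verification (any candidate private neighbor must lie in $U$, and the bijection $u_j\leftrightarrow v_j$ reduces it to $pn_G(V_i;f)=\emptyset$), which is a welcome completion of the argument.
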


\begin{proof}
Let $f=(V_1,V_2,...,V_k)$ be a $\chi_d^t$-coloring of $G$ such that $pn_G(V_i;f)=\emptyset$ for some $1\leq i\leq k$. Since $g=(V_1,...,V_{i-1},V_i\cup\{w\},V_{i+1},...,V_k,U)$ is a $\chi_d^t$-coloring of $M(G)$ with this property that $pn_{M(G)}(V_i\cup\{w\};g)=\emptyset$, we conclude $M(G) \in Class$ 1.
\end{proof}
%-----------------------------------------------------------------------------------------------------------------------------

\begin{prop}
\label{For any k>=3 there exists a non-complete graph with chi_d^t=k}
For any $k \geq 3$ there exists a non-complete graph with the total dominator chromatic number $k$.
\end{prop}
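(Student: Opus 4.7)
The plan is to construct, for each $k\ge 3$, a non-complete graph $G_k$ with $\chi_d^t(G_k)=k$ by iterating the Mycieleskian construction from a small base case; the preceding lemma makes this very clean.

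For the base case $k=3$ I would take $G_3:=P_4$. From Proposition \ref{chi_dt P_n}, since $4\equiv 1\pmod 3$, $\chi_d^t(P_4)=2\lceil 4/3\rceil-1=3$; and since $n=4\ne 2$ with $4\not\equiv 0\pmod 3$, Proposition \ref{Pn in Class 2} places $P_4$ in Class $1$. Evidently $P_4$ is non-complete and $\delta(P_4)=1$.

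For $k\ge 4$ I would recursively set $G_k:=M(G_{k-1})$ and argue by induction that $G_k$ is a non-complete graph lying in Class $1$, with $\delta(G_k)\ge 1$ and $\chi_d^t(G_k)=k$. Assuming the statement for $G_{k-1}$: Lemma \ref{G in Class1 then M(G) in Class1} gives $G_k\in$ Class $1$; Theorem \ref{chi M(G)=chi G+1}, whose hypothesis $\delta(G_{k-1})\ge 1$ is furnished by the induction, yields $\chi_d^t(G_k)=\chi_d^t(G_{k-1})+1=k$; the apex vertex $w$ of $M(G_{k-1})$ has no neighbor in the copy of $V(G_{k-1})$, so $\deg(w)=n<2n=|V(G_k)|-1$ and hence $G_k$ is non-complete; and $\delta(G_k)\ge 1$ since every $u_i$ is adjacent to $w$, $w$ is adjacent to every $u_i$, and each original vertex retains its $G_{k-1}$-neighbors.

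The only conceptual ingredient is the Class $1$ preservation already established in Lemma \ref{G in Class1 then M(G) in Class1}, so the main (and minor) obstacle is simply to identify a suitable non-complete Class $1$ graph at $k=3$; once $P_4$ is in hand, the rest is a routine induction. Alternative bases such as $W_4$ would work equally well via Proposition \ref{Wn in Class 1}, but $P_4$ is the most economical.
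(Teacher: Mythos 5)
Your proof is correct and follows essentially the same route as the paper: both arguments iterate the Mycielskian construction on a Class~1 graph with $\chi_d^t=3$, using Lemma \ref{G in Class1 then M(G) in Class1} together with Theorem \ref{chi M(G)=chi G+1} to raise the total dominator chromatic number by exactly one at each step. The only difference is the choice of seed ($P_4$ instead of the paper's $K_3$ and wheels), which is immaterial to the argument.
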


\begin{proof}
For $k=3$ there are wheels $W_n\in \mbox{ }Class$ 1 of odd order $n+1\geq 5$ with $\chi_d^t(W_n)=3$, by Propositions \ref{chi_dt W_n} and \ref{Wn in Class 1}. Now let $k\geq 4$. Lemma \ref{G in Class1 then M(G) in Class1} implies that if $G$ belongs to Class 1, then $M^t(G)$ belongs to Class 1 and $\chi_d^t(M^t(G))=\chi_d^t(G)+t$, where $M^t(G)$ is the Mycieleskian of $M^{t-1}(G)$ and $M^0(G)=G$. Now if we consider $G=K_3$ and $t=k-3$, we obtain $\chi_d^t(M^t(G))=k$, and our proof is completed.
\end{proof}
%-----------------------------------------------------------------------------------------------------------------------------

The converse of Lemma \ref{G in Class1 then M(G) in Class1} is false. For example, $K_2\in$ Class 1 but $M(K_2)=C_5\not\in$ Class 1.
%-----------------------------------------------------------------------------------------------------------------------------
Finally, in a natural manner, we end this paper with the following problem.\\

\begin{prob}
For any graph $G$ with no isolated vertex, find some bounds for $\chi_d^t(GM(G))$ in terms of $\chi_d^t(G)$, where $GM(G)$ is the generalized Mycieleskian of $G$.
\end{prob}

%%%%%%%%%%%%%%%%%%%%%%%%%%%%%%%%%%%%%%%%%%%% REFERENCES  %%%%%%%%%%%%%%%%%%%%%%%%%%%%%%%%%%%%%%%%%%%%%%%%%%%%%%%%%%%%%%%%%%%%%%%%5

\bigskip
\bigskip

%{\bf Received: Month xx, 200x}

{\footnotesize %\pn{\bf Adel P. Kazemi}\; \\
{College of Mathematical Sciences}, {University
of Mohaghegh Ardabili, P.O.Box 5619911367,} {Ardabil, Iran}\\
{\tt Email: adelpkazemi@yahoo.com, a.p.kazemi@uma.ac.ir}\\

%{\footnotesize \pn{\bf Second Author}\; \\ {Department of
%Mathematics}, {University
%of ABCD, P.O.Box xxxx,} {City, Country}\\
%{\tt Email: xxx\@xxxxx}\\
\end{document}